\newtheorem{theorem}{Theorem}[section]
\newtheorem{lemma}[theorem]{Lemma}
\newtheorem{proposition}[theorem]{Proposition}
\newtheorem{assumption}[theorem]{Assumption}
\newtheorem{condition}[theorem]{Condition}
\newtheorem{remark}[theorem]{Remark}
\numberwithin{equation}{section}
\newcommand{\be}{\begin{equation}}
\newcommand{\ee}{\end{equation}}
\newcommand{\bes}{\begin{equation*}}
\newcommand{\ees}{\end{equation*}}
\def\E{\bE}
\def\P{\bP} 
\def\bE{\mathbb{E}}
\newcommand{\R}{\mathbf{R}}
\renewcommand{\d}{{\rm d}}
\renewcommand{\geq}{\geqslant}
\renewcommand{\leq}{\leqslant}
\renewcommand{\P}{\mathrm{P}}
\def\m1{\mathbf{1}}
\title{Non-existence results for stochastic wave equations in one dimension}
\author{Mohammud Foondun\\ University of Strathclyde \and Eulalia Nualart \\Pompeu Fabra University\\}
\date{}
\begin{document}
\maketitle

\begin{abstract}
The purpose of this paper is to extend recent results of \cite{BonderG} and \cite{Foondun-Nualart} for the stochastic heat equation to the stochastic wave equation given by
\begin{equation*}
\left|\begin{aligned}
&\frac{\partial^2 u(t, x)}{\partial t^2}=\frac{\partial^2 u(t,\,x)}{\partial x^2}+ \sigma(u(t,x)) \dot{W}(t,\,x)+b(u(t,x)), \quad x \in D \setminus \partial  D, 
\; t>0,\\
&u(0,x)=u_0(x),\quad \frac{\partial }{\partial t}u(0,x) =v_0(x), \quad x \in D,
\end{aligned}\right.
\end{equation*}
where $\dot{W}$ is space-time white noise, $\sigma$ is a real-valued globally Lipschitz function but $b$ is assumed to be only locally Lipschitz continuous.  Three types of domain conditions are studied: $D=[0,1]$ with homogeneous Dirichlet boundary conditions, $D=[0, 2\pi]$ with periodic boundary conditions, and $D=\R$. 
 Then, under suitable conditions, the following integrability condition 
 $$
 \int_\alpha^{\infty} \frac{1}{[\beta^2+2\int_\alpha^s b(r) \, \d r]^{1/2}} \d s <\infty,\qquad \text{ for some } \alpha>0 \text{ and }  \beta > 0, 
 $$
is studied in relation to non-existence of global solutions.
\\
\\
\noindent{\it Keywords:}
Stochastic PDEs,  space-time white noise, wave equation.\\
\noindent{\it \noindent AMS 2010 subject classification:}
Primary 60H15; Secondary: 35K57.
\end{abstract}

\section{Introduction}
Consider the following stochastic heat equation 
\begin{equation}\label{dirichlet}
\left|\begin{aligned}
\frac{\partial u(t,\,x)}{\partial t}&= \frac{\partial^2 u(t,\,x)}{\partial x^2}+\sigma(u(t,\,x))\dot{W}(t,\,x)+b(u(t,\,x)),\quad x\in [0,\,1],\; t>0,\\
u(0,\,x)&=u_0(x),
\end{aligned}\right.
\end{equation}
with homogeneous Dirichlet boundary conditions, where $\sigma:\R \rightarrow \R$ is a globally Lipschitz function and $b:\R \rightarrow \R$ is a locally Lipschitz function.  The initial condition $u_0$ is assumed to  be nonnegative and continuous and $\dot{W}$ is a space-time Gaussian white noise.  In \cite{BonderG} Bonder and Groisman show that when  $\sigma(x)$ is a positive constant, the solution to (\ref{dirichlet}) blows up in finite time whenever $b$ is nonnegative, convex, and satisfies the following well-known {\it Osgood condition} for ordinary  differential equations: for some $a>0$ 
\begin{equation}\label{osgood}
\int_{a}^{\infty} \frac{1}{b(s)}\,\d s<\infty,
\end{equation}
where $1/0=\infty$. The recent results of  \cite{Dal-Khos-Zhang} and \cite{Foondun-Nualart} imply that  condition  (\ref{osgood}) is a necessary as well as sufficient condition for blow-up. More precisely,  Theorem 1.4 in \cite{Dal-Khos-Zhang} shows that if $u_0$ is H\"older continuous, $\vert \sigma(x)\vert=O(\vert x\vert (\log \vert x\vert)^{1/4})$ and $\vert b(x)\vert=O(\vert x\vert \log \vert x\vert)$ as $\vert x \vert \rightarrow \infty$, then there exists a global solution to equation (\ref{dirichlet}). In \cite{Foondun-Nualart}, it is shown that if $\sigma(x)$ is a positive constant and $b$ is nonegative and nondecreasing on $(0,\,\infty)$, then $b$ satisfies the Osgood condition \eqref{osgood} provided that the solution to \eqref{dirichlet} blows up in finite time with positive probability. Moreover, Bonder and Groisman's result is also derived for the case  where $[0,1]$ is replaced by the real line. Namely, 
if $\sigma$ is bounded and $b$ is nonegative, nondecreasing on $(0,\,\infty)$, and satisfies the Osgood condition \textnormal{(\ref{osgood})}, then almost surely, there is no global solution to equation \eqref{dirichlet} in the real line.

\vskip 12pt

The aim of this paper is to find analogous results for stochastic wave equations of the form
\begin{equation}\label{main-eq1}
\left|\begin{aligned}
&\frac{\partial^2 u(t, x)}{\partial t^2}=\frac{\partial^2 u(t,\,x)}{\partial x^2}+ \sigma(u(t,x)) \dot{W}(t,\,x)+b(u(t,x)), \quad x \in D \setminus \partial  D, 
\; t>0,\\
&u(0,x)=u_0(x),\quad \frac{\partial }{\partial t}u(0,x) =v_0(x), \quad x \in D,
\end{aligned}\right.
\end{equation}
where the initial conditions $u_0$ and $v_0$ are real-valued continuous functions, $\dot{W}$ is space-time Gaussian white noise, and $\sigma, b:\R \rightarrow \R$ are globally and locally Lipschitz functions, respectively.  We  consider three different cases for the domain $D$ with the associated  boundary conditions:
\begin{itemize}
\item {\bf  Case 1}: $D=[0,1]$ with homogeneous Dirichlet boundary conditions $$u(t,0)=u(t,1)=0,\quad   t> 0.$$
\item {\bf Case 2}: $D=I$, where $I$ denotes the unit circle, with periodic  boundary conditions
$$u(t,0)=u(t,2\pi),\quad  \frac{\partial }{\partial x}u(t,0)=\frac{\partial }{\partial x}u(t,2\pi), \quad  t> 0.$$
\item {\bf  Case 3}: $D=\R$ with the extra condition that $u_0$ and $v_0$ are bounded.
\end{itemize}

Following Walsh \cite{Walsh}, a local random field solution to \eqref{main-eq1} is a jointly measurable and adapted process $u=\{u(t,x)\}_{(t,x) \in \R_+ \times D}$ satisfying the following integral equation
\begin{equation}\label{mild1} \begin{split}
u(t,&x)=\int_D G_i(t,x,y) v_0(y) \, \d y+\frac{\partial}{\partial t} \left(\int_D G_i(t,x,y) u_0(y) \, \d y \right)\\
&+\int_0^t\int_D G_i(t-s, x,y) \sigma(u(s,y))\,W(\d s\,\d y) +\int_0^t\int_D G_i(t-s, x,y)b(u(s,y))\,\d s\,\d y \quad \text{a.s.}
\end{split}
\end{equation}
for all $t\in(0,\,\tau)$, where $\tau$ is some stopping time. If we can take $\tau=\infty$, then the local solution is also a global one. Here $G_i(t,x,y)$ is the fundamental solution or Green function of the wave equation  for Cases $i=1,2$ and 3, that is, 
\begin{equation*}
\left|\begin{aligned}
&\frac{\partial^2 }{\partial t^2}G_i(t,x,y)=\frac{\partial^2 }{\partial x^2} G_i(t,x,y), \quad x,y \in D \setminus \partial  D, 
\; t>0,\\
&G_i(0,x,y)=0, \quad \frac{\partial}{\partial t}G_i(0,x,y)=\delta_0(x-y),\quad G_i(t,x,y)\vert_{x \in \partial D}=0.
\end{aligned}\right.
\end{equation*}
It is well-known that the $G_i(t,x,y)$'s have  following expressions
\begin{equation} \label{G1}
G_1(t,x,y):= \sum_{n=1}^{\infty} \frac{\sin(n\pi t)}{n\pi} \varphi_n(x) \varphi_n(y), \quad x,y \in [0,1],
\end{equation}
where $\varphi_n(x)=\sqrt{2} \sin(n\pi x)$, $n \geq  1$ is a complete orthonormal system of $L^2([0,1])$,
$$
G_2(t,x,y)=G_2(t,x-y):= \sum_{n \in \mathbb{Z}} \frac 12 {\bf 1}_{\{ \vert x-y +2n\pi\vert <t\}}, \quad x,y \in I,
$$
and
$$
G_3(t,x,y)=G_3(t,x-y):=\frac 12 {\bf 1}_{\{ \vert x-y \vert <t\}}, \quad x,y \in \R.
$$
In the Appendix, we give some properties of these kernels that will be useful in the sequel.
For more information about the kernels $G_1$ and $G_2$, see for e.g. \cite{QS06} and  \cite{Muellerwave2}, respectively.   

Local existence and uniqueness for Cases 1 and 2 is known and follow from say \cite[Proposition II.3]{CarmonaNualart}  after a truncation procedure. We set 
\begin{equation} \label{tau}
\tau:=\sup\left\{ t>0: \sup_{x\in D}|u(t,\,x)|<\infty\right\},
\end{equation}
where $\sup \emptyset:=-\infty$. In fact, it suffices to use the same truncation argument as for the  heat equation in $[0,1]$ explained for e.g. in  \cite{Dal-Khos-Zhang, Foondun-Nualart}. If $\P(\tau<\infty)>0$, then we say that the solution blows up in finite time with positive probability and if  $\P(\tau<\infty)=1$, we say that the solution blows up in finite time almost surely. For Case 3, much less is known about local existence, see for example the introduction in \cite{Millet-Sanz}. 
For instance, when $b$ is a polynomial and $\sigma$ is continuous and the Lipschitz  constant grows polynomially, \cite{Chow3} shows the existence of a local solution to equation (\ref{main-eq1})
in the  Sobolev space $H^1(\R^d)$, $d \leq 3$, when the noise is white in time and spatially correlated. Concerning global existence, in \cite{Muellerwave}, the  case $b=0$ and $\vert \sigma(x) \vert \geq \vert  x \vert (\log \vert x \vert)^{1/2-\epsilon}$, for $\epsilon>0$ is considered, showing the  existence of a global unique solution. In the recent paper \cite{Millet-Sanz}, the authors study the compact support case in spatial dimension $d \in \{1,2,3\}$, and show that if  $\vert b(x)\vert=O(\vert x\vert (\log \vert x\vert)^{\delta})$ and $\vert \sigma(x)\vert=O(\vert x\vert (\log \vert x\vert)^{a})$ as $\vert x \vert \rightarrow \infty$, with  $\delta  \geq 2a>0$, then there is a unique global solution provided that $\delta<2$. 

However, a general integral condition for non-existence of global solutions as obtained in \cite{BonderG} and \cite{Foondun-Nualart} for the stochastic heat equation has not been addressed in the literature for the stochastic wave equation. The purpose of this paper is to contribute to filling this gap.  
\vskip 12pt
We will always work under the following assumption on the drift coefficient.
\begin{assumption}\label{b}
The function $b$ is locally Lipschitz,  nonnegative and nondecreasing.
\end{assumption}

We will also need the following integrability condition.
\begin{condition}\label{B}
For some $\alpha>0$ and $\beta > 0$,  
\begin{equation}\label{integrability}
T(\alpha,\,\beta):= \int_\alpha^{\infty} \frac{1}{[\beta^2+2\int_\alpha^s b(r) \, \d r]^{1/2}} \d s <\infty,
\end{equation} 
where $1/0=\infty$.
\end{condition} 
Observe that if $T(\alpha,\,\beta)$ is finite for some $\alpha>0$ and $\beta>0$, then it is also finite for all $\alpha>0$ and $\beta>0$, see Remark \ref{22} below.

Our first result concerns the bounded domain $[0,1]$ with homogeneous Dirichlet boundary conditions. In the special case that $\sigma$ is constant, it says that Condition \ref{B} is both necessary and sufficient for non-existence of global solutions to \eqref{main-eq1}. 

\begin{theorem}\label{Bonder-Groissman}
Suppose that Assumption \ref{b} holds. Consider equation \eqref{main-eq1} for Case 1. If $\sigma$ is bounded and the solution blows up in finite time with positive probability then $b$ satisfies Condition \ref{B}. On the other hand, if $\sigma$ is a positive constant, $u_0$ and $v_0$  are nonnegative, and Condition \ref{B} holds, then the solution blows up in finite time with a positive probability provided that $b$ is also convex.
\end{theorem}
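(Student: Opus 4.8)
The organizing principle is that Condition \ref{B} is precisely the Osgood-type criterion for finite-time blow-up of the \emph{second-order} autonomous ODE $y''=b(y)$: multiplying by $y'$ and integrating yields $(y')^2=\beta^2+2\int_\alpha^y b(r)\,\d r$ for $y(0)=\alpha,\ y'(0)=\beta$, so the blow-up time of this ODE is exactly $T(\alpha,\beta)$. Because the wave operator is second order in $t$, I would base both implications on comparison with $y''=b(y)$ rather than on the first-order $y'=b(y)$ that governs the heat case. Throughout I would use the kernel estimates recorded in the Appendix, specifically bounds of the form $\sup_x\int_0^t\int_D|G_1(t-s,x,y)|\,\d s\,\d y\lesssim t^2$ and $\sup_x\int_0^t\int_D G_1(t-s,x,y)^2\,\d s\,\d y\lesssim t^2$, together with $\sup_x\int_D|G_1(r,x,y)|\,\d y\lesssim r$.

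For the necessity implication I would argue the contrapositive: if $T(\alpha,\beta)=\infty$ for some --- hence, by Remark \ref{22}, every --- $\alpha,\beta$, then $\tau=\infty$ almost surely. Since $\sigma$ is bounded, the stochastic convolution $N(t,x)$ admits a continuous version with $N^*_T:=\sup_{t\le T,\,x}|N(t,x)|<\infty$ a.s., with moment bounds coming from the Burkholder--Davis--Gundy inequality and the Appendix $L^2$ bound on $G_1$. Working pathwise, set $M(t)=\sup_x|u(t\wedge\tau,x)|$; bounding the homogeneous term on the compact domain, estimating the noise term by $N^*_T$, and using that $b$ is nondecreasing and nonnegative together with the $L^1$ bound on $G_1$, I expect the inequality $M(t)\le C_T+c\int_0^t (t-s)\,b(M(s))\,\d s$ on $[0,T]$, where $C_T$ is a.s. finite. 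Differentiating twice reduces this to the comparison $\Phi''=c\,b(\Phi)$ with initial height $C_T$, whose maximal solution is global exactly when $T(\alpha,\beta)=\infty$ (the constant $c$ and the starting height being irrelevant to convergence of the integral by Remark \ref{22}). A second-order Bihari/Osgood comparison lemma then gives $M(t)\le\Phi(t)<\infty$ on every $[0,T]$, whence $\tau=\infty$ a.s.

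For the sufficiency implication, assume $\sigma>0$ constant, $u_0,v_0\ge0$, $b$ convex, and Condition \ref{B}. Taking expectations in \eqref{mild1} kills the mean-zero stochastic integral, so $w(t,x):=\E[u(t,x)]$ solves the deterministic wave equation with source $\E[b(u(s,y))]$, and Jensen's inequality gives $\E[b(u)]\ge b(w)$. Projecting onto the first Dirichlet eigenfunction $\varphi_1(x)=\sqrt2\sin(\pi x)\ge0$ and writing $\bar w(t)=\int_0^1 w(t,x)\varphi_1(x)\,\d x$, the $n=1$ mode of \eqref{G1} yields $\bar w''(t)=-\pi^2\bar w(t)+\langle \E[b(u(t,\cdot))],\varphi_1\rangle$, and a second Jensen step against the probability measure $\varphi_1\,\d x/\|\varphi_1\|_{L^1}$ gives $\bar w''\ge -\pi^2\bar w+c\,b(c'\bar w)$ with explicit $c,c'>0$. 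I would then run an energy estimate: once $\bar w$ is large, Condition \ref{B} forces $\int_\alpha^s b$ to grow superquadratically, so $c\,b(c'\bar w)$ dominates $\pi^2\bar w$, the trajectory becomes convex and increasing, and multiplying by $\bar w'$ and integrating bounds the blow-up time by a multiple of $T(\alpha,\beta)<\infty$. Thus $\bar w$, and hence $\sup_x\E[u(t,x)]$, becomes infinite at a finite time, and a localization argument rules out $\tau=\infty$ a.s., giving $\P(\tau<\infty)>0$.

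The step I expect to be the main obstacle is driving the projected trajectory into the blow-up regime in the sufficiency proof: since $u_0,v_0$ may be small (even zero) and $b(0)$ may vanish, the lower bound $\bar w''\ge-\pi^2\bar w+c\,b(c'\bar w)$ by itself permits a bounded, oscillatory $\bar w$, so I must show that the trajectory actually reaches the region where the superlinear term takes over. This is where the hypothesis that $\sigma$ is a positive constant is essential: it makes the stochastic convolution Gaussian and nondegenerate, so its variance grows (like $t^2$ by the Appendix bound) and $\E[b(u(t,x))]$ is pushed to $+\infty$, forcing $\bar w$ upward until the feedback mechanism completes the blow-up. A secondary difficulty, on the necessity side, is that $G_1$ is not pointwise nonnegative, so the reduction to a monotone integral inequality must rely on the $L^1$- and $L^2$-control of $G_1$ from the Appendix rather than on a maximum principle.
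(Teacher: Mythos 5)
Your first (necessity) implication is essentially the paper's own argument, just phrased contrapositively: the same decomposition of \eqref{mild1} into deterministic, stochastic and drift parts, the same Burkholder--Davis--Gundy bound on the stochastic convolution using boundedness of $\sigma$ and \eqref{g1q}, the same kernel $L^1$-bound \eqref{op} to reduce to the integral inequality $Y_t\leq C+ct+M_\tau+\int_0^t(t-s)b(Y_s)\,\d s$, and the same comparison with the second-order integral equation (the paper's Proposition \ref{comp} and Lemma \ref{lem}, which is exactly your ``second-order Bihari/Osgood lemma''). That part is sound.

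The sufficiency implication is where you diverge from the paper, and there the proposal has a genuine gap. You take expectations, so the stochastic integral vanishes identically, and the only trace of the noise left in your inequality $\bar w''\geq-\pi^2\bar w+c\,b(c'\bar w)$ is the claim that Gaussian fluctuations push $\E[b(u(t,x))]$ to $+\infty$. This claim is not substantiated, and in fact it cannot be made to work on the relevant time scale: the eigenfunction method needs the projected kernel $\sin(\pi(t-s))/\pi$ to be nonnegative, which confines you to a window of length at most $1$, and on such a window the variance of the projected stochastic convolution $M(t)$ of \eqref{M} is a \emph{fixed} constant. Hence the Jensen-type boost that convexity of $b$ extracts from the Gaussian fluctuations is a fixed, non-tunable constant $K$; if $u_0=v_0=0$ and $K$ is not large relative to $\sup_{w\geq 0}(\pi^2 w-c\,b(c'w))$, your ODE for $\bar w$ admits bounded solutions and no blow-up follows. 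This is exactly the obstacle you flag, and it is the reason the paper does \emph{not} argue in expectation: it uses the support theorem (Proposition \ref{deviation}) to show that for \emph{every} $L>0$, with positive (possibly tiny) probability the projected noise satisfies $M(t)\geq L$ on all of $[\tfrac1{16},\tfrac3{16}]$; then Condition \ref{B}, with $L$ chosen so large that $T(L/2,(B+1)/2)<\tfrac18$, forces the \emph{pathwise} comparison equation of Remark \ref{remc} to blow up inside the window. The largeness needed to enter the superlinear regime is obtained pathwise on an event of positive probability, not in mean.

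There is a second, independent gap at the end of your argument: even if you had established that $\bar w(t)=\E\big[\langle u(t,\cdot),\varphi_1\rangle\big]$ reaches $+\infty$ at a finite time, this does not yield $\P(\tau<\infty)>0$ under the paper's pathwise definition \eqref{tau} of blow-up, because an almost surely finite random field can have infinite moments (this is typical precisely for superlinear drifts). The ``localization argument'' you invoke to pass from moment blow-up to pathwise blow-up is the missing step, not a routine one; relatedly, the Fubini interchange $\E\int G_1\,b(u)=\int G_1\,\E[b(u)]$ needs an integrability hypothesis you do not have once moments may be infinite, and $G_1$ is signed, so Tonelli is unavailable before projecting. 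The paper's pathwise route avoids both problems: blow-up of the projection $g(t)=\kappa\int_0^1 u(t,x)\varphi_1(x)\,\d x$ on an event of positive probability directly forces $\sup_x|u(t,x)|=\infty$ there, since $\kappa\varphi_1\,\d x$ is a probability measure.
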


For second result, we consider the unit circle with periodic boundary condition. In this case, under Condition \ref{B}, we are able to prove almost sure blow-up as opposed to blow-up with positive probability.

\begin{theorem}\label{periodic}
Suppose that Assumption \ref{b} holds. Consider equation \eqref{main-eq1} for Case 2. If $\sigma$ is bounded and the solution blows up in finite time with positive probability then $b$ satisfies Condition \ref{B}. On the other hand, if $\sigma$ is a positive constant, $u_0$ and $v_0$  
are nonnegative, and Condition \ref{B} holds, then the solution blows up in finite time almost surely
provided that $b$ is also convex.
\end{theorem}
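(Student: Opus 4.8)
The two halves of the theorem are driven by completely different mechanisms, so I would prove them separately. The necessity half uses neither convexity nor constancy of $\sigma$ and runs exactly parallel to Case 1. Writing $M(t)=\sup_{x\in I}u(t,x)$ and projecting the mild equation \eqref{mild1} onto the constant mode, I would invoke the kernel identity $\int_I G_2(t-s,x,y)\,\d y=t-s$ (Appendix) together with monotonicity of $b$ to bound the drift contribution by $\int_0^t(t-s)\,b(M(s))\,\d s$. Since $\sigma$ is bounded, the stochastic convolution is a.s. finite and continuous on each compact $[0,T]$, so on $\{\tau<\infty\}$ the quantity $M$ is dominated by the solution of the scalar Volterra equation $z(t)=C+\int_0^t(t-s)b(z(s))\,\d s$, i.e. of $\ddot z=b(z)$ with finite data $C$. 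A first-integral computation ($\tfrac12\dot z^2=B(z)+\text{const}$ with $B'=b$) shows this ODE is global precisely when $T(\alpha,\beta)=\infty$; hence blow-up with positive probability forces $T(\alpha,\beta)<\infty$, which is Condition \ref{B}.

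For the sufficiency half I would set $\bar u(t)=\frac1{2\pi}\int_I u(t,x)\,\d x$. With $\sigma\equiv\sigma_0$ the stochastic term is Gaussian, so averaging \eqref{mild1} and again using $\int_I G_2(t-s,x,y)\,\d y=t-s$ yields the exact identity $\bar u(t)=\bar u_0+t\bar v_0+\sigma_0 N(t)+\int_0^t(t-s)\,\overline{b(u(s,\cdot))}\,\d s$, where $N(t)=\frac1{2\pi}\int_0^t\int_I (t-s)\,W(\d s\,\d y)$ is, up to a scalar, an integrated Brownian motion: $\dot N$ is a Brownian motion and $\Var N(t)\asymp t^3$. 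Convexity of $b$ and Jensen give $\overline{b(u(s,\cdot))}\ge b(\bar u(s))$, so, since $u_0,v_0\ge 0$, we get $\bar u(t)\ge \sigma_0 N(t)+\int_0^t(t-s)\,b(\bar u(s))\,\d s$. The vanishing lowest eigenvalue of the periodic Laplacian is decisive: the homogeneous evolution of the averaged equation is constant-plus-linear, with no restoring force, so a positive velocity is never dissipated.

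The almost-sure blow-up then rests on two facts. First, because $N$ is an integrated Brownian motion, $\limsup_{t\to\infty}N(t)=+\infty$ a.s., so on $\{\tau=\infty\}$ the displayed lower bound forces $\bar u$ to up-cross every level $A$. Second, under Condition \ref{B} the blow-up time $T_A:=\int_A^\infty [\,2\int_A^r b(w)\,\d w\,]^{-1/2}\,\d r$ of $\ddot\phi=b(\phi)$ from data $(A,0)$ tends to $0$ as $A\to\infty$. At a first up-crossing time $\tau_A$ of a high level $A$ one has $\bar u(\tau_A)=A$ and $\dot{\bar u}(\tau_A)\ge 0$; restarting the free equation there, the strong Markov property gives $\bar u(t)\ge A+\sigma_0 Z(t)+\int_{\tau_A}^t(t-s)b(\bar u(s))\,\d s$ with $Z$ a fresh integrated Brownian motion, independent of $\cF_{\tau_A}$ and satisfying $Z(\tau_A)=\dot Z(\tau_A)=0$. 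On the event $\{\sup_{0\le r\le T_A+1}|Z(\tau_A+r)|\le\eta\}$ the comparison ODE started from level $A-\sigma_0\eta$ blows up before $\tau_A+T_A+1$; since $T_A\to 0$, this event has probability bounded below by a constant $p>0$, uniformly in large $A$ and independent of the past. Iterating over successive up-crossings of increasing levels on disjoint noise windows and applying a conditional Borel--Cantelli argument, one finds that on $\{\tau=\infty\}$ infinitely many independent attempts each succeed with probability $\ge p$, a contradiction; hence $\mathbb{P}(\tau<\infty)=1$.

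The genuinely hard point is obtaining the per-attempt lower bound $p$ that does \emph{not} degrade as the random past drives $\bar u$ far below zero. This is precisely what the combination of $T_A\to 0$ and the free propagator delivers: restarting at a high up-crossing guarantees level $A$ together with nonnegative velocity, so only a short window of small fresh noise is needed, and such a window has fixed positive probability irrespective of the starting configuration. By contrast, in Case 1 the analogous restart is governed by the first Dirichlet eigenfunction, whose homogeneous evolution is oscillatory ($\cos(\pi(t-\tau_A))$, $\sin(\pi(t-\tau_A))$); the restoring force pulls the averaged solution back, the clean up-crossing argument breaks down, and one can only secure blow-up with positive probability, consistent with Theorem \ref{Bonder-Groissman}.
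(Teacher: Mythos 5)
Your necessity half is essentially the paper's own proof: bound each term of the mild formulation using the kernel identity \eqref{g2}, the boundedness of $\sigma$ (so the stochastic convolution is a.s.\ finite up to $\tau$), and the monotonicity of $b$, then compare with the scalar Volterra equation and conclude via Lemma \ref{lem}. (One small point: blow-up in \eqref{tau} is of $\sup_x|u(t,x)|$, so you should dominate that quantity rather than $\sup_x u(t,x)$; this is harmless here because $G_2\ge 0$ and $b\ge 0$ make the drift term nonnegative, so the solution cannot explode to $-\infty$, but it deserves a sentence.) Your sufficiency half starts exactly as the paper does --- spatial average, stochastic Fubini, Jensen via convexity --- but then takes a genuinely different route to almost-sure blow-up. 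The paper stays pathwise: Propositions \ref{LIL} and \ref{assumpC} show that almost surely $\limsup_{t\to\infty}\inf_{0\le h\le 1}G(t+h)=\infty$, i.e.\ Assumption \ref{C} holds for the noise path itself, and then the deterministic scheme of Proposition \ref{p1} applies verbatim: along a sequence $t_n$ the effective data $\alpha_n,\beta_n\to\infty$, so $T(\alpha_n,\beta_n)<1$ and the comparison equation of Remark \ref{remc} blows up inside a unit window. You demand far less of the noise --- only $\limsup_{t}N(t)=\infty$ a.s. --- and compensate probabilistically: up-crossing stopping times deliver level $A$ together with nonnegative velocity (this uses the exact averaged identity, which makes $\bar u$ differentiable), the strong Markov property of the white noise yields a fresh integrated Brownian motion on the next window, a small-noise event of fixed probability $p>0$ forces the comparison ODE started from level $A-\sigma_0\eta$ to blow up inside a bounded window, and a conditional Borel--Cantelli iteration over disjoint windows gives $\P(\tau=\infty)\le (1-p)^K$ for every $K$. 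This is sound, and it buys independence from the window-infimum LIL machinery (Theorem \ref{tw} and Propositions \ref{LIL}, \ref{assumpC} become unnecessary); what it costs is stochastic bookkeeping the paper never needs: you must construct the stopping times recursively (next up-crossing after the previous window closes, set to $\infty$ otherwise) so that the windows are disjoint and the conditioning is legitimate, and you must verify that under Condition \ref{B} the zero-velocity blow-up time $T(A,0)$ is finite and bounded in $A$ --- Lemma \ref{lem} is stated for $\beta>0$, and finiteness at $\beta=0$ requires $b(A)>0$, which Condition \ref{B} does guarantee for large $A$ since $b$ must then be unbounded. Your closing remark that the oscillatory Dirichlet propagator is what blocks this restart in Case 1, leaving only positive-probability blow-up there, is correct and matches why the paper resorts to the support theorem in Theorem \ref{Bonder-Groissman}.
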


Finally, we study the equation defined on the whole line and show almost sure blow-up under Condition \ref{B}. 

\begin{theorem}\label{line}
Suppose that Assumption \ref{b} holds. Consider equation \eqref{main-eq1} for Case 3 with  $\sigma$ a positive constant and $u_0$ and $v_0$  nonnegative functions. If we suppose that Condition \ref{B} holds, then almost surely, there is no global solution to \eqref{main-eq1}.
\end{theorem}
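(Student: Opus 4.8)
The plan is to combine three ingredients: finite speed of propagation, which both decouples the equation into independent spatial cones and shows that nonnegative initial data can only help; a light-cone comparison driven purely by the monotonicity of $b$; and a support-type estimate letting the noise push the solution into the region where $b$ is strictly positive. The key structural point is that on $\R$ no convexity is needed, because the clean identity $\int_\R G_3(t-s,x,y)\,\d y=t-s$ replaces the Jensen step used for Cases 1 and 2.

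First I would reduce to \emph{positive-probability} blow-up from zero initial data, uniformly over spatial translates. Since $b\geq0$ and $G_3\geq0$, a monotonicity argument for the mild equation \eqref{mild1} (iterating the Picard scheme) gives $u\geq\underline u$ a.s., where $\underline u$ is driven by the same noise but with $u_0=v_0\equiv0$; hence it suffices to treat zero data. Finite speed of propagation for $G_3(t,x,y)=\tfrac12\ind_{\{|x-y|<t\}}$ shows that $u(s,y)$ for $(s,y)$ in the backward cone $\mathcal C_k=\{(s,y):0\le s\le T^*,\ |y-x_k|\le T^*-s\}$ depends only on the restriction of $W$ to $\mathcal C_k$. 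Choosing centres $x_k$ spaced so that the $\mathcal C_k$ are pairwise disjoint, the events $\{u$ blows up inside $\mathcal C_k$ before $T^*\}$ are independent, and each has probability at least $p$, where $p$ is the common zero-data blow-up probability (the same for every $k$ by translation invariance of $W$). If $p>0$ then $\prod_k(1-p)=0$, so almost surely some cone blows up, whence $\tau\le T^*<\infty$ a.s.

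The core is therefore $p>0$ for a single cone with zero data, where $u=N+\text{drift}$, $N(t,x)=\sigma\int_0^t\!\int_\R G_3(t-s,x,y)\,W(\d s\,\d y)$ and the drift $\tfrac12\int_0^t\!\int_{x-(t-s)}^{x+(t-s)}b(u(s,z))\,\d z\,\d s\geq0$. Since Condition \ref{B} forces $b\not\equiv0$, pick $\alpha>r_0:=\sup\{r:b(r)=0\}$, so $b(\alpha)>0$. The light-cone comparison is the engine: if $u(s,y)\geq w(s)$ throughout a cone, then $b(u(s,z))\geq b(w(s))$ by monotonicity, and since $\int_\R G_3(t-s,x,y)\,\d y=t-s$ the drift of a space-independent lower bound $w$ equals $\int_0^t(t-s)b(w(s))\,\d s$, i.e.\ $w$ obeys $\ddot w=b(w)$. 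As $\tfrac12\dot w^2=\tfrac12\beta^2+\int_\alpha^w b$, this ODE with $w(s_0)=\alpha$, $\dot w(s_0)=\beta\geq0$ blows up exactly at $s_0+T(\alpha,\beta)$, finite by Condition \ref{B}.

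To ignite the comparison I need a positive-probability event on which, at some fixed time $s_0$, the solution has already entered $\{b>0\}$: concretely $u(s_0,\cdot)\geq\alpha$ and $\partial_t u(s_0,\cdot)\geq0$ on $[-L,L]$ with $L\geq T(\alpha,0)$ so the blow-up cone fits, together with $N'(s,y)\geq-\delta$ on the cone for the post-$s_0$ noise. On this event d'Alembert's formula bounds the free evolution of the data at $s_0$ below by $\alpha$, and the comparison above yields $u\geq w$ with $w$ solving $\ddot w=b(w)$, $w(s_0)=\alpha-\delta$, $\dot w(s_0)=0$; taking $\delta<\alpha-r_0$ makes $w$ blow up by $T^*:=s_0+T(\alpha-\delta,0)<\infty$, so $u$ blows up in $\mathcal C_k$. \emph{The main obstacle is showing the push event has positive probability.} Because the wave stochastic convolution satisfies $N(0,\cdot)\equiv0$, one cannot demand $N$ large near $t=0$; instead I must show that the centred continuous Gaussian field $(N,\partial_t N)$ can be simultaneously large and nonnegative on $[-L,L]$ at the single time $s_0$ with positive probability. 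I would obtain this from a support/Cameron--Martin theorem for $N$ (exhibiting a Cameron--Martin element dominating $(\alpha,0)$ on $[-L,L]$), or from a direct small-ball estimate using the explicit covariance of the wave convolution; verifying that the required shift lies in the support is the delicate point, after which the independence across disjoint cones upgrades $p>0$ to almost-sure blow-up.
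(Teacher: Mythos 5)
Your strategy is genuinely different from the paper's (which proves an almost sure law-of-iterated-logarithm-type statement, Proposition \ref{p2}, for the stochastic convolution on space-time boxes at large times via Theorem \ref{tw}, and then runs the comparison of Proposition \ref{p1}), and several of your ingredients are sound: the reduction to zero data by monotone Picard iteration (valid here because $\sigma$ is constant, so both solutions carry the \emph{same} noise term and $G_3\ge 0$, $b$ nondecreasing propagate the ordering); the independence of the solution across disjoint backward light cones by finite speed of propagation, which upgrades positive-probability blow-up to almost sure blow-up; and the light-cone comparison with the ODE $\ddot w=b(w)$, whose blow-up time is $T(\alpha,\beta)$ exactly as in Lemma \ref{lem}. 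However, there is a genuine gap at the ignition step, and it is not the one you flagged.

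The event you condition on is ill-posed: you require $\partial_t u(s_0,\cdot)\geq 0$ on $[-L,L]$ and then restart via d'Alembert at time $s_0$. For the wave equation driven by space-time white noise, $u(\cdot,x)$ is only H\"older continuous of order (almost) $1/2$ in time (this is exactly the content of Proposition \ref{holder2bb}); the time derivative of the stochastic convolution at a fixed time is a genuine distribution of negative regularity, not a function, so $\{\partial_t u(s_0,\cdot)\geq 0 \text{ on } [-L,L]\}$ is either meaningless or a null event, and no Cameron--Martin shift of the pair $(N,\partial_t N)$ can repair this, since the Gaussian fluctuation of the velocity around any shift is still not a function. Without velocity control the restarted d'Alembert term $\tfrac12\int_{x-(t-s_0)}^{x+(t-s_0)}\partial_t u(s_0,y)\,\d y$ has no sign, so largeness of $u(s_0,\cdot)$ alone propagates nothing forward. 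The fix is to abandon the time slice and make the push event a \emph{space-time} event: demand $N\geq\alpha$ on the whole truncated cone $\mathcal{K}=\{(t,x): s_0\le t\le s_0+L,\ \vert x\vert\le L-(t-s_0)\}$. This is well defined and has positive probability by the Gaussian support theorem: the Cameron--Martin element $h(t,x)=\tfrac12\int_0^{t\wedge s_0}\int_{\vert y\vert\le M}\ind_{\{\vert x-y\vert<t-s\}}\, c\,\d y\,\d s$, i.e.\ the kernel smeared against $c\,\ind_{[0,s_0]\times[-M,M]}$, satisfies $h\geq c\,s_0^2/2$ on $\mathcal{K}$ once $M\geq 2L+s_0$, so taking $c$ large and using that the drift is nonnegative yields $u\geq\alpha$ on $\mathcal{K}$ with positive probability, after which your light-cone comparison and the cone-independence argument go through. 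Note that this corrected event is precisely a positive-probability analogue of what the paper obtains almost surely in Proposition \ref{p2}; the paper's route then concludes directly, with no need for the zero-data reduction or the independence step, which is what the LIL buys over the support theorem.
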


Theorem \ref{line} complements the main results of \cite{Millet-Sanz} when specialized to our context. Indeed, if $b(x)=\vert x\vert (\log_+ \vert x\vert)^{\delta})$ with $\delta>0$ and $\log_+(z):=\log(z \vee e), z \geq 0$, then Condition \ref{B} holds if and only if $\delta>2$. Observe that in this case, the Osgood condition (\ref{osgood}) holds if and only if $\delta>1$. Thus, a higher power of $\delta$ is needed  for the solution to wave equation to blow up in finite time, compared to the heat equation.

The strategy behind our proofs follows that of \cite{Foondun-Nualart} but with significant differences.  The study of the wave equation is more complicated partly due to the fact that the Green functions are not well behaved.
The first step consists in extending the  results in \cite{leonvilla} to the integral equations associated to  the wave equation. Namely, we show that under Assumption \ref{b}, the integral Condition \ref{B} is necessary and  sufficient for the  blow up in finite time for the solution to the  integral equation given by 
\begin{equation*}
X_t=A+B t+\int_0^t (t-s)\,  b(X_s)\,\d s+g(t),
\end{equation*}
where $A$  and $B$ are nonnegative constants and g is a continuous function  on $[0, \infty)$ satisfying
\begin{equation} \label{g}
\limsup_{t\rightarrow \infty} \inf_{0\leq h\leq 1} g(t+h)=\infty.
\end{equation}
This is achieved by  using comparison theorems for integral equations of the same kind with $g=0$.
Then, in order to show condition (\ref{g}) for the stochastic wave equation,  as opposed to \cite{Foondun-Nualart} where the law of the iterated logarithm for the bi-fractional Brownian motion was used, we have to resort to the theory of Gaussian processes to prove some key estimates. For the case with Dirichlet boundary conditions the use of the support theorem for Brownian motion differs also from the techniques applied in \cite{Foondun-Nualart}.

It is also important to make some comparisons with blow-up results for deterministic wave equations, that is, when $\sigma=0$.  For bounded domains in $\R^n$ with homogeneous Dirichlet boundary conditions, this has been addressed in \cite{G73} where it was shown that instead of Condition \ref{B}, we should have the following integrability for the solution to blow up in finite time
\begin{equation*}
\int_\alpha^{\infty} \frac{1}{[\mu \alpha^2+\beta^2-\mu s^2+2\int_\alpha^s b(r) \, \d r]^{1/2}} \d s <\infty,
\end{equation*}
where $\mu$ is the first eigenvalue of the Dirichlet Laplacian and $\alpha$ and $\beta$ are strictly positive constants depending on the initial conditions. The extra condition that $b$ is convex and $b(s)-\mu s$ is non-decreasing for $s\geq \alpha$ is also imposed. This means that in the deterministic case, we need to the initial conditions to be large enough for the solution to blow up. This is the not the case for corresponding stochastic version. Condition \ref{B} is independent of the domain and of the initial conditions.  This agrees with the intuition that noise pushes the solution high enough for blow-up to occur. The extension to the whole space $\R^n$ in the deterministic  case is also considered in \cite{G73}, where similar integrability conditions above are shown to be sufficient for the solution to blow up in finite time under similar assumptions on the initial condition and $b$.

The rest of the paper is organized as follows. In Section 2 we present the preliminary results explained above needed for the proofs of the main theorems which are given in Section 3.

\section{Preliminary results}

\subsection{Blow up for a class of integral equations}

Suppose that Assumption \ref{b} holds. Consider the following integral equation
\begin{equation} \label{yy}
y(t)=\alpha+\beta (t-t_0)+\int_{t_0}^t (t-s) b(y(s))\,\d s, \quad t \geq t_0,
\end{equation}
where $\alpha>0$ and $\beta>0$.  By Picard-Lindel\"of theorem, equation (\ref{yy}) admits a unique solution up to its blow up time given by
\begin{equation*}
T:=\sup \{t>0: |y(t)|<\infty \},
\end{equation*}
where $\sup \emptyset :=\infty$. We say that the solution blows up in finite time if $T<\infty$. 
\begin{lemma} \label{lem}
Under the above conditions, $T=T(\alpha,\,\beta)$, where $T(\alpha, \beta)$ is given in \textnormal{(\ref{integrability})}.
\end{lemma}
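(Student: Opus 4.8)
The plan is to recognize that the integral equation \eqref{yy} is equivalent to a second-order autonomous ODE and then to integrate it explicitly through its energy integral. First I would differentiate \eqref{yy}. Since $\frac{\d}{\d t}\int_{t_0}^t (t-s) b(y(s))\,\d s = \int_{t_0}^t b(y(s))\,\d s$ (the boundary term $(t-t)b(y(t))$ vanishes), we obtain $y'(t) = \beta + \int_{t_0}^t b(y(s))\,\d s$, and differentiating once more $y''(t) = b(y(t))$. Reading off the initial data from \eqref{yy} gives $y(t_0)=\alpha$ and $y'(t_0)=\beta$, so on $[t_0,T)$ the integral equation is equivalent to the initial value problem
$$ y'' = b(y), \qquad y(t_0) = \alpha, \quad y'(t_0) = \beta. $$

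Next I would extract monotonicity, which is where Assumption \ref{b} is essential. Because $b\geq 0$, we have $y''=b(y)\geq 0$, so $y'$ is nondecreasing; together with $y'(t_0)=\beta>0$ this yields $y'(t)\geq \beta>0$ on $[t_0,T)$, hence $y$ is strictly increasing. I would then form the first integral of the motion: multiplying $y''=b(y)$ by $y'$, integrating from $t_0$ to $t$, and using $\frac{\d}{\d t}\int_\alpha^{y(t)} b(r)\,\d r = b(y(t))y'(t)$, gives $\tfrac12(y'(t))^2-\tfrac12\beta^2 = \int_\alpha^{y(t)} b(r)\,\d r$. Taking the positive square root (legitimate since $y'>0$) yields
$$ y'(t) = \Big[\beta^2 + 2\int_\alpha^{y(t)} b(r)\,\d r\Big]^{1/2}. $$

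Finally I would separate variables. Dividing by the right-hand side, integrating $t$ from $t_0$ to any $t<T$, and substituting $u=y(t)$ (valid by strict monotonicity, with $y(t_0)=\alpha$) produces
$$ t - t_0 = \int_\alpha^{y(t)} \frac{\d u}{[\beta^2 + 2\int_\alpha^u b(r)\,\d r]^{1/2}}. $$
Since $y$ is increasing, $y(t)\to \ell:=\sup_{t<T}y(t)\in(\alpha,\infty]$ as $t\to T^-$, and letting $t\uparrow T$ gives $T-t_0 = \int_\alpha^{\ell}[\,\cdots\,]^{-1/2}\,\d u$. Taking $t_0=0$, which is permissible by translation invariance of the autonomous equation, then gives $T = \int_\alpha^{\ell}[\,\cdots\,]^{-1/2}\,\d u$.

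The step I expect to require the most care is the concluding dichotomy relating $\ell$ to blow-up. If $T<\infty$, the solution cannot remain bounded on $[0,T)$, for otherwise the local existence theorem would extend it past $T$, contradicting the definition of the blow-up time; hence $\ell=\infty$ and $T=T(\alpha,\beta)<\infty$. Conversely, if $T(\alpha,\beta)=\infty$, the displayed identity with $\ell=\infty$ forces $T=\infty$, since a finite $\ell$ would make $y'$ bounded and keep the solution finite for all time. In both cases $T=T(\alpha,\beta)$. Everything else is the standard energy argument for a conservative ODE; the nonnegativity and monotonicity in Assumption \ref{b} enter precisely to guarantee $y'>0$, which is what justifies both the single-valued square root and the change of variables.
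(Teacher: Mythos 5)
Your proof is correct and follows essentially the same route as the paper: differentiate the integral equation twice to get $y''=b(y)$, multiply by $y'$ to obtain the energy identity $y'(t)=[\beta^2+2\int_\alpha^{y(t)}b(r)\,\d r]^{1/2}$, and then identify the blow-up time via separation of variables. The only difference is one of detail: where the paper simply invokes ``the Osgood condition for first order ODEs,'' you unpack that step explicitly (positivity of $y'$, the change of variables, and the dichotomy between bounded extension and blow-up), which is a faithful elaboration rather than a different argument.
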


\begin{proof}
We differentiate (\ref{yy}) once to obtain 
$$
y'(t)=\beta+\int_{t_0}^t b(y(s)) \, \d s.
$$
Differentiating again yields  the second order ordinary differential equation (ODE)
$$
y''(t)y'(t)= b(y(t))y'(t) \quad t\geq t_0
$$
with $y(t_0)=\alpha$ and $y'(t_0)=\beta$. The above equation is equivalent to
$$
y'(t)^2-y'(t_0)^2=2\int_{t_0}^t b(y(s)) \,  \d y(s)=2\int_{\alpha}^{y(t)} b(r) \, \d r.
$$
We next write the above as $y'(t)=F(y(t))$ with $F(y(t))=[\beta^2+2\int_{\alpha}^{y(t)} b(r) \, \d r]^{1/2}.$ Then, by the Osgood condition for first order ODEs, $y(t)$ blows up and the blow-up time is given by $T(\alpha,\beta).$
\end{proof}
\begin{remark} \label{22}
We now show that if $T(\alpha,\beta)<\infty$, for some $\alpha,\beta>0$, then $T(\alpha,\beta)<\infty$, for all $\alpha,\beta>0.$ Let $\beta_2\geq \beta_1$, we clearly have $T(\alpha,\beta_2)\leq T(\alpha,\beta_1).$ Now we write
 \begin{align*}
\beta_1^2+2\int_\alpha^sb(s)\,\d s&=\left(\frac{\beta_1}{\beta_2}\right)^2[\beta_2^2+2\left(\frac{\beta_2}{\beta_1}\right)^2\int_\alpha^sb(s)\,\d s]\\
&\geq\left(\frac{\beta_1}{\beta_2}\right)^2[\beta_2^2+2\int_\alpha^sb(s)\,\d s]
\end{align*}
and therefore $T(\alpha,\beta_1)\leq\frac{\beta_2}{\beta_1} T(\alpha,\beta_2)$. Thus, $T(\alpha,\beta_1)$ is finite if and only if $T(\alpha,\beta_2)$ is finite.  We now let $\alpha_2\geq \alpha_1$. We have $T(\alpha_2,\beta)\leq T(\alpha_1,\beta)$. We now suppose that $T(\alpha_2,\beta)<\infty$. Since $b$ is nonegative,
we have
\begin{align*}
\int_{\alpha_2}^{\infty} \frac{1}{[\beta^2+2\int_{\alpha_1}^{\alpha_2}b(r) \, \d r+2\int_{\alpha_2}^s b(r) \, \d r]^{1/2}} \d s \leq  \int_{\alpha_2}^{\infty} \frac{1}{[\beta^2+2\int_{\alpha_2}^s b(r) \, \d r]^{1/2}} \d s < \infty
\end{align*}
and
$$
\int_{\alpha_1}^{\alpha_2} \frac{1}{[\beta^2+2\int_{\alpha_1}^s b(r) \, \d r]^{1/2}} \d s < \infty.
$$
This means that $T(\alpha_1,\beta)<\infty.$ This finishes the proof.
\end{remark}
We will also need the following comparison result. Let $\tilde{y}(t)$ satisfying the following integral inequality
\begin{equation*} 
\tilde{y}(t)\leq \tilde{\alpha}+\tilde{\beta} (t-t_0)+\int_{t_0}^t (t-s) b(\tilde{y}(s))\,\d s, \quad t \geq t_0,
\end{equation*}
where $\tilde{\alpha}>0$ and $\tilde{\beta}>0$. 
\begin{proposition} \label{comp}
Let $y(t)$ denote the solution to \eqref{yy} and $\tilde{y}(t)$ as above. If $\alpha>\tilde{\alpha}$ and $\beta> \tilde{\beta}$, then $y(t)\geq \tilde{y}(t)$ up to blow-up time. 
\end{proposition}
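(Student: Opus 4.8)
The plan is to set up a comparison between the exact solution $y(t)$ of the integral equation \eqref{yy} and the subsolution $\tilde y(t)$, and to show the difference $w(t) := y(t) - \tilde y(t)$ stays nonnegative up to blow-up time. I would start at $t = t_0$, where $y(t_0) = \alpha > \tilde\alpha \geq \tilde y(t_0)$, so $w(t_0) > 0$. The key structural observation is that both functions satisfy (in)equalities driven by the \emph{same} kernel $(t-s)$ and the \emph{same} nondecreasing function $b$ (Assumption \ref{b}). Subtracting the inequality for $\tilde y$ from the equation for $y$ gives
\begin{equation*}
w(t) \geq (\alpha - \tilde\alpha) + (\beta - \tilde\beta)(t - t_0) + \int_{t_0}^t (t-s)\,\bigl[b(y(s)) - b(\tilde y(s))\bigr]\,\d s.
\end{equation*}
Since $\alpha - \tilde\alpha > 0$, $\beta - \tilde\beta > 0$, and $t - s \geq 0$ on the domain of integration, the only term that could drive $w$ negative is the integral, and that term is controlled by the sign of $b(y(s)) - b(\tilde y(s))$, which is governed by the sign of $w(s)$ through monotonicity of $b$.

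The cleanest route is a contradiction argument. Suppose $w$ becomes negative; by continuity of $y$ and $\tilde y$ and the strict positivity $w(t_0) > 0$, let $t_1 := \inf\{t \geq t_0 : w(t) \leq 0\}$ be the first time $w$ vanishes, so $t_1 > t_0$, $w(t_1) = 0$, and $w(s) \geq 0$ for all $s \in [t_0, t_1]$. On this interval $y(s) \geq \tilde y(s)$, hence by monotonicity of $b$ we have $b(y(s)) - b(\tilde y(s)) \geq 0$, so the integral term is nonnegative. Evaluating the displayed inequality at $t = t_1$ then yields
\begin{equation*}
0 = w(t_1) \geq (\alpha - \tilde\alpha) + (\beta - \tilde\beta)(t_1 - t_0) + \int_{t_0}^{t_1}(t_1 - s)\bigl[b(y(s)) - b(\tilde y(s))\bigr]\,\d s \;>\; 0,
\end{equation*}
a contradiction, since $\alpha - \tilde\alpha > 0$ already forces the right-hand side to be strictly positive. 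Therefore $w(t) > 0$, i.e. $y(t) \geq \tilde y(t)$, for all $t$ up to the blow-up time.

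I expect the main technical point to be a careful handling of the domains of definition and blow-up times, rather than the inequality manipulation itself. One must ensure that $\tilde y$ is defined and continuous wherever the comparison is claimed, and clarify that the statement ``up to blow-up time'' refers to $T = T(\alpha,\beta)$, the blow-up time of $y$ established in Lemma \ref{lem}; on any compact subinterval $[t_0, t_1] \subset [t_0, T)$ both functions are finite and continuous, so the first-passage time $t_1$ is well defined and the argument goes through on each such subinterval. A subtle but benign point is that the hypotheses use strict inequalities $\alpha > \tilde\alpha$ and $\beta > \tilde\beta$, which is exactly what makes the contradiction clean and removes any need for a Grönwall-type estimate to close the nonstrict case; the strictness of $\alpha - \tilde\alpha$ alone suffices to reach the contradiction even without invoking the $(\beta - \tilde\beta)$ term.
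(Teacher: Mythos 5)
Your proof is correct and takes essentially the same approach as the paper's: both are continuity-based contradiction arguments that use the strict gap $\alpha-\tilde\alpha>0$ together with the monotonicity of $b$ to make the integral term harmless on the interval where $y\geq\tilde y$. The only cosmetic difference is that you evaluate the inequality at the first crossing time $t_1=\inf\{t\geq t_0 : y(t)\leq\tilde y(t)\}$ to get the contradiction $0\geq\alpha-\tilde\alpha>0$, whereas the paper phrases it as showing that the supremum of the set where the comparison holds cannot be smaller than the blow-up time; the underlying estimate is identical.
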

\begin{proof}
The proof is similar to that of Lemma 2.1 in \cite{leonvilla}. Let 
$$T=\sup\left\{ t>0: |y(t)|\wedge |\tilde{y}(t)|<\infty\right\}$$ 
and consider the set 
\begin{equation*}
A=\{t\in(t_0,\,T]: y(s)\geq \tilde{y}(s)\quad \text{for} \quad s\in(t_0,t] \}.
\end{equation*}
It is straightforward to see that this set is non-empty. We need to show that the supremum of this set is equal to $T$. Let $t_0<T_1<T$. We will prove that such a $T_1$ cannot be the supremum. We have for $t>0$,
\begin{align*}
y(T_1+t)-\tilde{y}(T_1+t)&=\alpha-\tilde{\alpha}+(\beta-\tilde{\beta})(T_1+t)+\int_{t_0}^{T_1+t}(T_1+t-s)(b(y(s))-b(\tilde{y}(s))\,\d s\\
&\geq \alpha-\tilde{\alpha}+(\beta-\tilde{\beta})(T_1+t)+\int_{T_1}^{T_1+t}(T_1+t-s)(b(y(s))-b(\tilde{y}(s))\,\d s.
\end{align*}
By the continuity of the integral, the last term tends to zero as $t$ gets smaller. Hence for small $t$, we have $y(T_1+t)\geq\tilde{y}(T_1+t)$ and therefore $T_1$ cannot be the supremum. This completes the proof.
\end{proof}

\begin{remark} \label{remc}
We have the reverse of the above. If instead we had  $\tilde{y}(t)$ satisfying the following integral inequality
\begin{equation*} 
\tilde{y}(t)\geq \tilde{\alpha}+\tilde{\beta} (t-t_0)+\int_{t_0}^t (t-s) b(\tilde{y}(s))\,\d s, \quad t \geq t_0,
\end{equation*}
but with $\alpha<\tilde{\alpha}$ and $\beta<\tilde{\beta}.$ We would then have $y(t)\leq\tilde{y}(t)$ up to blow-up time and the proof follows exactly as that of Proposition \ref{comp}.
\end{remark}

We next consider the following assumption. 
\begin{assumption}\label{C}
$g: [0,\,\infty) \rightarrow \R$ is a continuous function such that 
\begin{equation*}
\limsup_{t\rightarrow \infty} \inf_{0\leq h\leq 1} g(t+h)=\infty.
\end{equation*}
\end{assumption}

The proof of the following proposition follows along the same lines as that of Proposition 2.2 of \cite{Foondun-Nualart}, using Lemma \ref{yy} and the comparison principle above.
\begin{proposition} \label{p1}
Let $b:\R  \rightarrow \R^+$ as above, $A, B \geq 0$, and suppose that Assumption \ref{C} holds.  Then the solution to the integral equation
\begin{equation}\label{ODE}
X_t=A+B t+\int_0^t (t-s)\,  b(X_s)\,\d s+g(t)
\end{equation}
blows up in finite time if and only if Condition \ref{B} holds.
\end{proposition}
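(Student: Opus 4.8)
The plan is to establish the two implications separately, in each case comparing the solution $X$ of \eqref{ODE} with the solution $y$ of the autonomous equation \eqref{yy}, whose blow-up time is identified as $T(\alpha,\beta)$ by Lemma \ref{lem}. The necessity of Condition \ref{B} is the easy direction and rests on Proposition \ref{comp} (upper comparison), while its sufficiency is the substantive direction and rests on Remark \ref{remc} (lower comparison) together with Assumption \ref{C}.

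\textbf{Necessity.} Suppose Condition \ref{B} fails; by Remark \ref{22} this means $T(\alpha,\beta)=\infty$ for \emph{every} $\alpha,\beta>0$, so the solution of \eqref{yy} started at $t_0=0$ with any positive data is global. I would argue by contradiction: if $X$ blew up at some finite time $T^\ast$, then, since $g$ is continuous, it is bounded by a constant $G$ on the compact interval $[0,T^\ast]$, and using $g(t)\le G$ gives, for $t\in[0,T^\ast)$,
\begin{equation*}
X_t\le (A+G)+Bt+\int_0^t (t-s)\,b(X_s)\,\d s .
\end{equation*}
Bounding the constant and linear coefficients above by positive numbers, $X$ satisfies the hypothesis of Proposition \ref{comp} with some $\tilde\alpha,\tilde\beta>0$; choosing $\alpha>\tilde\alpha$ and $\beta>\tilde\beta$ yields $X_t\le y(t)$ up to the blow-up time of $y$. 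But $y$ is global and finite, contradicting $X_t\to\infty$ as $t\uparrow T^\ast$. Hence $X$ cannot blow up, which is the contrapositive of the desired implication.

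\textbf{Sufficiency.} Now assume Condition \ref{B}, so $T(\alpha,\beta)<\infty$ for all $\alpha,\beta>0$. Two preliminary facts I would record: first, $b$ is necessarily unbounded (if $b\le K$ then $\beta^2+2\int_\alpha^s b\le\beta^2+2K(s-\alpha)$, forcing $T(\alpha,\beta)=\infty$), so $b(M)\to\infty$ as $M\to\infty$; second, $T$ is nonincreasing in each variable (Remark \ref{22}) and $T(\alpha_0,\beta)\to0$ as $\beta\to\infty$ by dominated convergence, whence $T(\alpha,\beta)\to0$ whenever $\alpha,\beta\to\infty$. By Assumption \ref{C} I can pick arbitrarily large $t_0$ with $g\ge M$ on $[t_0,t_0+1]$, $M$ to be chosen large; assuming $X$ has not already blown up before $t_0$, on this interval $X_s\ge g(s)\ge M$, since the remaining terms in \eqref{ODE} are nonnegative. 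The key step is to manufacture a positive slope from the quadratic kernel: setting $t_1:=t_0+\tfrac12$ and using $b(X_s)\ge b(M)$ on $[t_0,t_1]$, a direct evaluation of $\int_{t_0}^{t_1}(t-s)\,\d s$ gives, for $t\in[t_1,t_0+1]$,
\begin{equation*}
X_t\ge M+\int_{t_0}^{t_1}(t-s)\,b(X_s)\,\d s+\int_{t_1}^t (t-s)\,b(X_s)\,\d s\ge \Big(M+\tfrac{b(M)}{8}\Big)+\tfrac{b(M)}{2}(t-t_1)+\int_{t_1}^t (t-s)\,b(X_s)\,\d s .
\end{equation*}
This places $X$ in the setting of Remark \ref{remc} on $[t_1,t_0+1]$ with $\tilde\alpha=M+\tfrac{b(M)}{8}$ and $\tilde\beta=\tfrac{b(M)}{2}$, both tending to infinity with $M$. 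Taking $\alpha=\tilde\alpha/2$, $\beta=\tilde\beta/2$ and choosing $M$ so large that $T(\alpha,\beta)<\tfrac12$ (possible by the second preliminary fact), the solution $y$ of \eqref{yy} started at $t_1$ blows up at $t_1+T(\alpha,\beta)<t_0+1$, and Remark \ref{remc} forces $X_t\ge y(t)$ there. Hence $X$ blows up no later than $t_0+1<\infty$.

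\textbf{Main obstacle.} I expect the delicate point to be the sufficiency direction, and precisely the generation of the linear term needed to apply Remark \ref{remc}: Assumption \ref{C} supplies only a \emph{flat} lower bound $g\ge M$ with no slope, whereas the lower comparison requires strict domination of both the constant ($\tilde\alpha>\alpha$) \emph{and} the slope ($\tilde\beta>\beta$). The midpoint restart at $t_1$, together with the evaluation of $\int_{t_0}^{t_1}(t-s)\,\d s$, is exactly what converts the flat bound on $[t_0,t_1]$ into an effective slope of size $\sim b(M)$ on $[t_1,t_0+1]$; the remaining care is to verify that the window of length $\tfrac12$ still suffices for blow-up, which is guaranteed by $T(\alpha,\beta)\to0$ as the data grow.
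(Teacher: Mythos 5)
Your proof is correct, and its overall skeleton is the paper's: identify the blow-up time of the autonomous equation \eqref{yy} via Lemma \ref{lem}, dominate $X$ from above with Proposition \ref{comp} for one direction, dominate it from below with Remark \ref{remc} for the other, and use Assumption \ref{C} to locate unit windows where $g$ is large. The differences are local but worth recording. For necessity you argue the contrapositive (failure of Condition \ref{B} makes the comparison solution global, hence $X$ bounded), whereas the paper argues directly (blow-up of $X$ forces the dominating solution $Y_t=A+(B+1)t+M+1+\int_0^t(t-s)b(Y_s)\,\d s$ to blow up, so $T(A+M+1,B+1)<\infty$); these are the same argument read in opposite directions, though you should note, as the paper does, that nonnegativity of $b$ and continuity of $g$ preclude blow-up to $-\infty$. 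The genuine divergence is in sufficiency, at the step where a flat lower bound must be turned into ``constant plus slope'' so that Remark \ref{remc} applies: the paper uses the elementary observation that $I_n:=\inf_{0\le h\le 1}g(t_n+h)$ satisfies $I_n\ge\frac12 I_n+\frac12 I_n t$ for $t\in[0,1]$, and so takes $\alpha_n=A+\frac14 I_n$, $\beta_n=B+\frac14 I_n$ directly; you instead harvest the slope from the drift integral over the first half-window, $\int_{t_0}^{t_1}(t-s)\,b(X_s)\,\d s\ge\frac{b(M)}{8}+\frac{b(M)}{2}(t-t_1)$, which costs you the auxiliary (correct) fact that Condition \ref{B} forces $b$ to be unbounded. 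The paper's device is shorter and needs nothing about $b$ beyond Assumption \ref{b}; your route has the compensating merit of making explicit a point the paper passes over in one clause, namely why $T(\alpha_n,\beta_n)$ can be made smaller than the window length --- your monotonicity-plus-dominated-convergence proof that $T(\alpha,\beta)\to 0$ as the data grow is exactly the justification the paper's ``take $n$ large enough so that $T(\alpha_n,\beta_n)<1$'' silently relies on. Finally, both you and the paper apply Remark \ref{remc} only on a finite window rather than for all $t\ge t_0$; this localized use is legitimate precisely because the comparison solution blows up strictly inside the window.
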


 \begin{proof}
 Suppose that the solution blows up at time $T<\infty$. Since $g$ is continuous, we can set
 \begin{equation*}
M:=\sup_{s \in [0,T]}|g(s)|.
 \end{equation*}
Let $t \in [0,T] $. As $b$ is nonnegative,  \eqref{ODE} gives
\begin{equation*}
X_t\leq A +B t +M+\int_0^t (t-s) \, b(X_s)\,\d s.
\end{equation*}
The nonnegativity of $b$ together with the continuity of $g$ imply that $X_t$ can only blow up to {\it positive infinity}. Let $Y_t=A +(B+1) t+M+1+\int_0^t (t-s)\,  b(Y_s)\,\d s$. Then by the comparison result given by Proposition \ref{comp}, we have $X_t\leq Y_t$ on $[0,\,T]$.  But since $X_t$ blows up at time $T$, $Y_t$ should also blow up by time $T$. By Lemma \ref{lem}, $b$ satisfies Condition \ref{B}, that is, $T(A+M+1, B+1)<\infty$ and hence $T(\alpha,\beta)<\infty$ for all $\alpha>0$ and $\beta>0$. 

We now assume that $T(\alpha, \beta)<\infty$ for some $\alpha, \beta>0.$  Let $\{ t_n\}_{n=1}^\infty$ be some sequence which tends to infinity. The nonnegativity of $b$ implies that
 \begin{align*}
 X_{t+t_n}&\geq A +B t+\int_{t_n}^{t+t_n} (t-s) \; b(X_s)\,\d s+g(t+t_n)\\
 &\geq A+B t+\int_{0}^{t} (t-s) \, b(X_{s+t_n})\,\d s+g(t+t_n)\\
 &\geq A+\frac{1}{2}\inf_{0\leq h\leq 1}g(h+t_n)+(B +\frac{1}{2}\inf_{0\leq h\leq 1}g(h+t_n))t+\int_{0}^{t}(t-s) \; b(X_{s+t_n})\,\d s,
\end{align*}
where the last inequality holds whenever $0\leq t \leq 1$. 

We now set $\alpha_n:=A+\frac{1}{4}\inf_{0\leq h\leq 1}g(h+t_n)$ and $\beta_n:=B +\frac{1}{4}\inf_{0\leq h\leq 1}g(h+t_n)$, where $n$ is taken large enough so that $\inf_{0\leq h\leq 1}g(h+t_n)>0$. The comparison principle described in Remark \ref{remc} implies that $X_{t+t_n}\geq Z_t$, where
\begin{equation*}
Z_t=\alpha_n+\beta_n t+\int_{0}^{t}(t-s) b(Z_s)\,\d s.
\end{equation*}
Since we are assuming that $T(\alpha, \beta)<\infty$, we can take $n$ large enough so that $T(\alpha_n,\beta_n)<1$ which means that that the blow-up time of $Z_t$ is strictly less than 1. Now since $X_{t+t_n}\geq Z_t$ we have that the blow-up of $X_t$ is finite and the proof is complete.
\end{proof}
We will need to check Assumption \ref{C} for Case 3 (see Section 2.3). Recall that for the  heat equation, the law of iterated logarithm for the bi-fractional Brownian motion  was used, see \cite{Foondun-Nualart}. Instead, here we will use a limit theorem for Gaussian processes proved in \cite[Theorem 4]{W70}. Let $(X(t), t \in \R^+)$ be a real separable centered Gaussian process. We denote covariance function by 
$r(t,s)=\E(X(t) X(s))$ and standard deviation by $v(t)=\sqrt{r(t,t)}$. The continuous correlation function is given by 
$$
\rho(s,t)=\frac{r(t,s)}{v(t) v(s)}.
$$
Consider the following assumptions.
\begin{assumption} \label{A1}
\begin{enumerate}
\item[\textnormal{(i)}] Suppose that there  exist positive constant $c,T$ and $\delta$ such that
$
\rho(t, t+h) \leq 1-c(h/t)^{\alpha},
$
for all $t$ and $h$ such that $t>T$ and $0<h/t<\delta$, and for all $t$ and $s$ such that
$h/t>\delta$ and $t>T$, $\rho(t, t+h)<1-c\delta^{\alpha}$, for some $\alpha \in (0,2]$.

\item[\textnormal{(ii)}] Suppose that 
$
\lim_{s \rightarrow \infty} \rho(t, ts) \log s=0,
$
uniformly with respect  to $t$.
\end{enumerate}
\end{assumption}

\begin{theorem} \label{tw}
Suppose that Assumption \ref{A1} holds. 
Then, almost surely,
\begin{equation} \label{li}
\limsup_{t \rightarrow \infty} \frac{X(t)}{v(t) \sqrt{2\log \log t}}>1.
\end{equation}
\end{theorem}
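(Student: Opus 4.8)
The plan is to establish \eqref{li} by the classical route to the lower half of an iterated--logarithm law for Gaussian processes, namely a second--moment Borel--Cantelli argument along a geometric subsequence; this is precisely the content of \cite[Theorem 4]{W70}, whose hypotheses are encoded in Assumption \ref{A1}. First I would fix $\theta>1$, set $t_n=\theta^n$, and for a level $a\in(0,1]$ introduce the events
\[
A_n=\Bigl\{\,X(t_n)>a\,v(t_n)\sqrt{2\log\log t_n}\,\Bigr\}.
\]
Since $X(t_n)/v(t_n)$ is standard normal, the Gaussian tail estimate $\P(N>\lambda)\sim(\lambda\sqrt{2\pi})^{-1}\e^{-\lambda^2/2}$ together with $\log\log t_n=\log n+O(1)$ gives $\P(A_n)$ comparable to $n^{-a^2}(\log n)^{-1/2}$, so that $\sum_n\P(A_n)=\infty$ exactly when $a\le 1$. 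Thus at the critical level $a=1$ the one--point probabilities are already summable-divergent.

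Next I would reduce the problem to showing that $A_n$ occurs for infinitely many $n$ almost surely: on that event $\limsup_n X(t_n)/\bigl(v(t_n)\sqrt{2\log\log t_n}\bigr)\ge a$, and letting $a\uparrow1$ gives the bound $\ge 1$. To upgrade this to the \emph{strict} inequality in \eqref{li} I would work with $\sup_{t\in[t_n,\theta t_n]}X(t)$ in place of the single value $X(t_n)$; here Assumption \ref{A1}(i), which forces a minimal local decorrelation $1-\rho(t,t+h)\ge c(h/t)^{\alpha}$, supplies the modulus-of-continuity / entropy control that both lets one pass from the discrete grid to the continuous parameter and shows that the interval supremum exceeds the single-point value by enough to push the attainable level above $1$.

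Since the $A_n$ are not independent, the decisive step is the Kochen--Stone form of the second Borel--Cantelli lemma: it suffices to verify, together with $\sum_n\P(A_n)=\infty$, that
\[
\liminf_{m\to\infty}\frac{\sum_{j,k\le m}\P(A_j\cap A_k)}{\bigl(\sum_{k\le m}\P(A_k)\bigr)^2}=1 .
\]
For $j<k$ one estimates the bivariate Gaussian exceedance $\P(A_j\cap A_k)$ in terms of $\rho_{jk}:=\rho(t_j,t_k)$. Writing $t_k=t_j\,\theta^{k-j}$ and applying Assumption \ref{A1}(ii) with $s=\theta^{k-j}$ yields $\rho_{jk}\log(\theta^{k-j})\to0$ uniformly in $t_j$; choosing $\theta$ large first makes even the adjacent correlations small, so that an inequality of the form $\P(A_j\cap A_k)\le\P(A_j)\P(A_k)\exp\bigl(C\rho_{jk}\log\log t_k\bigr)$ shows the off-diagonal contribution is asymptotic to $\bigl(\sum_k\P(A_k)\bigr)^2$, confirming the Kochen--Stone ratio and hence $\P(\limsup_n A_n)=1$.

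I expect the main obstacle to be exactly this correlation control: one must quantify how much the positive correlation $\rho_{jk}$ inflates the joint tail, and check that the decay furnished by Assumption \ref{A1}(ii) makes the excess over the independent case summable relative to $\bigl(\sum_{k\le m}\P(A_k)\bigr)^2$, with the near-diagonal band ($k-j$ small) handled by taking $\theta$ large. A secondary technical difficulty is the transition from the grid $\{t_n\}$ to continuous time, where Assumption \ref{A1}(i) must be converted into a quantitative oscillation bound ensuring $\sup_{t\in[t_n,\theta t_n]}X(t)$ is not materially larger than $X(t_n)$ except for the controlled gain that produces the strict inequality in \eqref{li}.
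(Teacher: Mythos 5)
The first thing to note is that the paper does not actually prove Theorem \ref{tw}: it is imported verbatim from \cite[Theorem 4]{W70} and used as a black box, so there is no internal argument to compare yours against. Your reconstruction of the lower-bound machinery is the classical one (geometric grid $t_n=\theta^n$, Gaussian tail estimates giving $\sum_n\P(A_n)=\infty$ at every level $a\le 1$, Kochen--Stone with Assumption \ref{A1}(ii) controlling the off-diagonal joint tails and $\theta$ large taming the near-diagonal band), and, modulo the usual bivariate normal estimates, it does yield $\limsup_{t\to\infty}X(t)/\bigl(v(t)\sqrt{2\log\log t}\bigr)\ge 1$ almost surely.

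The genuine gap is your treatment of the strict inequality. The proposed mechanism --- replacing $X(t_n)$ by $\sup_{t\in[t_n,\theta t_n]}X(t)$ and letting Assumption \ref{A1}(i) ``push the attainable level above $1$'' --- cannot work. Assumption \ref{A1}(i) decorrelates points only according to their \emph{relative} separation $h/t$, so a grid in $[t_n,\theta t_n]$ with pairwise correlations at most $1-c\epsilon^{\alpha}$ has cardinality $k\approx\log\theta/\epsilon$, which is bounded in $n$; by the Slepian-type comparison you would invoke, the gain of the interval supremum over the single point is additive, of order $\sqrt{2c\epsilon^{\alpha}\log k}$, a constant in $n$, and it is annihilated by the normalization $\sqrt{2\log\log t_n}\to\infty$ (equivalently: at any level $a>1$ the exceedance probabilities of the interval suprema still decay like $n^{-a^2+o(1)}$ and remain summable). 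No choice of $\epsilon$ and $\theta$ converts this into a multiplicative gain, and in fact no argument can close this gap, because the strict inequality is false under Assumption \ref{A1} alone: standard Brownian motion has $\rho(t,t+h)=(1+h/t)^{-1/2}$ and $\rho(t,ts)=s^{-1/2}$, hence satisfies both parts of Assumption \ref{A1} with $\alpha=1$, yet Khinchin's law of the iterated logarithm gives $\limsup_{t\to\infty}B_t/\sqrt{2t\log\log t}=1$ exactly, almost surely. So what is provable --- and what your argument, suitably completed, proves --- is the non-strict bound $\ge 1$; the ``$>1$'' in the statement should be read as ``$\ge 1$'' unless the hypotheses are strengthened. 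This does not harm the paper: in Propositions \ref{prop4}, \ref{assumpC} and \ref{p2} only a strictly positive lower bound on the $\limsup$ is ever used, since the oscillation terms subtracted there are $o\bigl(v(t)\sqrt{\log\log t}\bigr)$.
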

Set 
\begin{equation}\label{G}
G(t):=\int_0^tB_s\,\d s,
\end{equation}
where  $(B_t)_{t \geq 0}$ is a Brownian motion. Recall that $G(t)$ is a Gaussian process with mean zero and variance $t^3/3$. The following is \cite[Corollary 2, p.238]{W70}.
\begin{proposition}\label{LIL}
Almost surely, 
\begin{equation*}
\limsup_{t\rightarrow \infty}\frac{G(t)}{\sqrt{\frac{2}{3}t^3\log \log t}}=1.
\end{equation*}
\end{proposition}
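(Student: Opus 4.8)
The plan is to treat the two halves of the law of the iterated logarithm separately, both resting on the explicit second--moment structure of $G$. First I would record that for $0\le s\le t$,
\[
r(s,t)=\E[G(s)G(t)]=\int_0^s\!\!\int_0^t \min(u,w)\,\d w\,\d u=\frac{s^2}{6}(3t-s),\qquad v(t)^2=\frac{t^3}{3},
\]
so that $\rho(s,t)=\sqrt{s}\,(3t-s)/(2t^{3/2})$ for $s\le t$. The decisive feature, inherited from the self--similarity $G(ct)\stackrel{d}{=}c^{3/2}G(t)$, is that $\rho$ is scale--invariant: writing $x=h/t$ one gets $\rho(t,t+h)=(2+3x)/\big(2(1+x)^{3/2}\big)=:f(x)$, a function of $x$ alone, and likewise $\rho(t,ts)=(3s-1)/(2s^{3/2})$ depends only on $s$. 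This reduces the verification of Assumption \ref{A1} to one--variable estimates.

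For the lower bound I would apply Theorem \ref{tw} to $X=G$. Condition (ii) is immediate, since $\rho(t,ts)\log s=(3s-1)(\log s)/(2s^{3/2})\to0$ as $s\to\infty$, uniformly in $t$ because the expression does not involve $t$. For condition (i) I would Taylor--expand $f(x)=1-\tfrac38 x^2+O(x^3)$ near $0$, which gives $f(x)\le1-cx^2$ (so $\alpha=2$) on a neighbourhood $0<x<\delta$; for $x\ge\delta$ the strict convexity bound $(1+x)^{3/2}>1+\tfrac32 x$ shows $f(x)<1$ for every $x>0$, and since $f$ is continuous with $f(x)\to0$, its supremum over $[\delta,\infty)$ is some $m_\delta<1$, so a single constant $c$ can be chosen to cover both ranges. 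Theorem \ref{tw} then applies and furnishes the lower half of the law of the iterated logarithm, namely $\limsup_{t\to\infty}G(t)/\big(v(t)\sqrt{2\log\log t}\big)\ge1$ almost surely, i.e. $\limsup_{t\to\infty}G(t)/\sqrt{\tfrac23 t^3\log\log t}\ge1$.

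For the matching upper bound I would run a Borel--Cantelli argument along a geometric grid. Fix $\epsilon>0$ and $\theta\in(1,(1+\epsilon)^{2/3})$, set $t_n=\theta^n$ and $a(t)=\sqrt{\tfrac23 t^3\log\log t}$. Using the Borell--TIS concentration inequality for the supremum of the centered Gaussian process $G$ on $[0,t_{n+1}]$, together with $\E[\sup_{s\le T}G(s)]=O(T^{3/2})=o(a(T))$ and $\sup_{s\le T}\Var(G(s))=T^3/3$, one obtains
\[
\P\Big(\sup_{s\le t_{n+1}}G(s)>(1+\epsilon)a(t_n)\Big)\le\exp\!\Big(-\tfrac{(1+\epsilon)^2}{\theta^3}(1+o(1))\log\log t_n\Big)\le n^{-(1+\epsilon)^2/\theta^3+o(1)},
\]
which is summable by the choice of $\theta$. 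Borel--Cantelli and the monotonicity of $a$ then give $G(t)\le(1+\epsilon)a(t)$ for all large $t$, hence $\limsup_{t\to\infty}G(t)/a(t)\le1+\epsilon$; letting $\epsilon\downarrow0$ completes the upper bound and, combined with the previous paragraph, the claimed equality.

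I expect the main obstacle to be the maximal estimate in the upper bound: since $G$ is neither Markov nor a martingale, one cannot invoke a reflection principle directly, and the supremum over each block must be controlled by Gaussian concentration, or, alternatively, through the decomposition $G(t)=tB_t-\int_0^t s\,\d B_s$ and Doob's inequality applied to the martingale part $\int_0^t s\,\d B_s$ (whose quadratic variation is $t^3/3$). The lower bound, by contrast, is essentially bookkeeping once the scale--invariance of $\rho$ is noticed, since Theorem \ref{tw} carries out the probabilistic work; the only delicate point there is choosing $c$ and $\delta$ in Assumption \ref{A1} uniformly across the two ranges of $h/t$.
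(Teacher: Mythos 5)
Your proposal is correct, but note that the paper does not actually prove this proposition: Proposition \ref{LIL} is quoted verbatim as Corollary 2, p.~238 of \cite{W70}, so any written argument is necessarily a different route. What you do is reconstruct both halves. For the lower half, your computations check out: $r(s,t)=\tfrac{s^2}{6}(3t-s)$ for $s\le t$, hence $v(t)^2=t^3/3$, the correlation is scale invariant, $\rho(t,t+h)=f(h/t)$ with $f(x)=(2+3x)/\bigl(2(1+x)^{3/2}\bigr)=1-\tfrac38x^2+O(x^3)$, and your two-range choice of a single constant $c$ verifies Assumption \ref{A1} with $\alpha=2$ (admissible, since the assumption allows $\alpha\in(0,2]$). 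For the upper half, the Borell--TIS blocking argument with $t_n=\theta^n$, $\theta^3<(1+\epsilon)^2$, is sound, and it is sharp precisely because self-similarity gives $\E[\sup_{s\le T}G(s)]=O(T^{3/2})=o\bigl(a(T)\bigr)$ where $a(T)=\sqrt{\tfrac23T^3\log\log T}$. Your verification of Assumption \ref{A1} for $G$ has independent value for the paper: the proof of Proposition \ref{assumpC} applies \eqref{li} to $G$, which requires knowing either Proposition \ref{LIL} or that $G$ satisfies Assumption \ref{A1}; the paper never checks the latter, and you supply it.

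Two caveats. First, Theorem \ref{tw} as printed concludes with the \emph{strict} inequality $\limsup>1$; taken literally this is incompatible with the equality you prove (your upper bound would ``refute'' the theorem you invoke), since $G$ itself satisfies Assumption \ref{A1} yet has $\limsup$ exactly $1$. Your silent downgrade to $\limsup\ge1$ is the only consistent reading --- the strict inequality must be an imprecision in the paper's quotation of Watanabe's theorem --- but this should be stated explicitly rather than glossed over. Second, the alternative you sketch for the maximal estimate, namely $G(t)=tB_t-\int_0^t s\,\d B_s$ with Doob's inequality on the martingale part, does not give the sharp constant: the term $tB_t$ alone satisfies $\limsup_{t\to\infty} tB_t/a(t)=\sqrt{3}$, so a triangle-inequality combination of the two pieces yields an upper bound of order $1+\sqrt{3}$, not $1$. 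Keep Borell--TIS (or an equivalent sharp Gaussian maximal inequality) as the actual argument and drop the alternative, or present it only as a non-sharp consistency check.
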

 
Following similar ideas as in \cite[Section 4.1] {leonvilla} we get the following.
\begin{proposition} \label{assumpC}
The process $G(t)$ satisfies Assumption \ref{C} almost surely.
\end{proposition}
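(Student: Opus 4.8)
The plan is to verify the two requirements of Assumption \ref{C} for $g = G$: continuity, and the divergence of $\limsup_{t\to\infty}\inf_{0\le h\le 1} G(t+h)$. Continuity comes for free: since $t \mapsto B_t$ is almost surely continuous, $G$ is almost surely continuously differentiable with $G'(t) = B_t$, so $G$ is certainly continuous. Thus the whole task reduces to showing that almost surely
\begin{equation*}
\limsup_{t\to\infty}\ \inf_{0\le h\le 1} G(t+h) = \infty.
\end{equation*}

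The key idea is a separation of scales. Proposition \ref{LIL} tells us that along a (random) subsequence $G(t)$ reaches the order $t^{3/2}\sqrt{\log\log t}$, whereas over any unit interval $[t,t+1]$ the oscillation of $G$ is governed by
\begin{equation*}
\sup_{0\le h\le 1}\bigl| G(t+h)-G(t)\bigr| \le \int_t^{t+1} |B_s|\,\d s,
\end{equation*}
which, by the classical law of the iterated logarithm for Brownian motion, is only of order $t^{1/2}\sqrt{\log\log t}$. Since this is negligible compared to the peak value of $G$, at a time where $G(t)$ is large the function $G$ cannot drop appreciably over the following unit interval, and this forces the inner infimum to be large as well.

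To carry this out I would work on the intersection of two full-measure events. First, by Proposition \ref{LIL} there is almost surely a sequence $t_n \uparrow \infty$ with $G(t_n) \ge \tfrac12\sqrt{\tfrac23 t_n^3\log\log t_n}$ for all $n$. Second, by the law of the iterated logarithm for $B$, almost surely there is a (random) $s_0$ such that $|B_s| \le 2\sqrt{s\log\log s}$ for every $s \ge s_0$. On the intersection, for all $n$ large enough that $t_n \ge s_0$ we estimate
\begin{equation*}
\inf_{0\le h\le 1} G(t_n+h) \ge G(t_n) - \int_{t_n}^{t_n+1}|B_s|\,\d s \ge \tfrac12\sqrt{\tfrac23 t_n^3\log\log t_n} - 3\sqrt{t_n\log\log t_n},
\end{equation*}
where the last step uses the bound on $|B_s|$ together with the monotonicity of $s\mapsto \sqrt{s\log\log s}$ for large $s$. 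The right-hand side tends to $\infty$ with $n$, which yields the claim.

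The only genuine content is the uniform control of $G$ over the unit interval, and this is handled entirely by the order-of-magnitude gap between the $t^{3/2}$ growth of the peaks and the $t^{1/2}$ size of the increments, so no delicate estimate is required once the two laws of the iterated logarithm are in hand. The main (and minor) point to be careful about is that both the extraction of $t_n$ and the bound on $B$ are almost-sure statements referring to possibly different null sets, so one must fix a single $\omega$ in their intersection before combining the two bounds.
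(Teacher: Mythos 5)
Your proof is correct, but it handles the key step --- controlling the oscillation of $G$ over unit intervals --- by a genuinely different mechanism than the paper. The paper bounds $\E[(G(t)-G(s))^2] \le t(t-s)^2$, invokes a moment estimate for Gaussian processes (Lemma 5.2 of \cite{Carmona}) to get $\E[\sup_{s,t\in[n,n+2]}|G(t)-G(s)|^p] \le A_p n^{p/2}$, and then uses a summability (Borel--Cantelli type) argument to show that the oscillation over $[n,n+2]$ is almost surely negligible compared with $n^{3/2}\sqrt{\log\log n}$; combining this with the law of the iterated logarithm for $G$ (Proposition \ref{LIL}, via Theorem \ref{tw}) yields Assumption \ref{C}. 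You instead exploit the pathwise identity $G'(t)=B_t$: the oscillation is bounded by $\int_t^{t+1}|B_s|\,\d s$, which by the classical LIL for Brownian motion is $O(\sqrt{t\log\log t})$ --- an order $t^{1/2}$ quantity against the order $t^{3/2}$ peaks supplied by Proposition \ref{LIL}, so the conclusion follows immediately upon intersecting the two full-measure events (and you correctly note that one must fix a single $\omega$ in that intersection). Your route is more elementary and gives a sharper oscillation bound, avoiding the Gaussian-process machinery entirely; its limitation is that it is tied to the special structure $G=\int_0^{\cdot} B_s\,\d s$. The paper's moment-plus-summability template is precisely what generalizes to the space-time field $g(t,x)$ of Case 3 (Propositions \ref{supn}, \ref{supnas}, and \ref{p2}), where no such pathwise derivative representation is available, which is presumably why the authors argue the one-parameter case in the same uniform way.
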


\begin{proof}
  Observe that 
$$\E((G(t)-G(s))^2)=(t-s)^3/3+s(t-s)^2 \leq t (t-s)^2 \quad \text{for all} \quad s \leq t.$$
Then, appealing to \cite[Lemma 5.2]{Carmona},
 for all $p \geq 1$, there exists a constant  $A_p>0$ such that 
 for any integer $n \geq 1$,
 $$
  \E\left[\sup_{s,t \in [n,n+2]}\vert G(t)-G(s) \vert^p \right] \leq A_p n^{p/2}.
  $$ 
  Observe that we are applying \cite[Lemma 5.2]{Carmona} with $C(T)=n+2\leq 3n$ and $(t-s) \leq 2$.
 Therefore, for $p>1$, we get that
 \begin{equation}\label{li1} \begin{split}
 \E \left[\sum_{n=1}^{\infty} \sup_{s,t \in [n,n+2]}
  \frac{\vert G(t)-G(s)\vert^p}{(n^{3/2}\sqrt{\log \log n})^p}\right] 
  &\leq \sum_{n=1}^{\infty} \frac{A_p n^{p/2}}{(n^{3/2}\sqrt{\log \log n})^p} <\infty.
 \end{split}
 \end{equation}
 Next let $\omega \in \Omega$ such that (\ref{li}) and (\ref{li1}) hold. Then, we can write
 \begin{align*}
&\inf_{h \in [0,1]} G(t+h)(\omega) \geq G(t)(\omega)+\inf_{h \in [0,1]} \left(-\vert G(t+h)(\omega)-G(t)(\omega)\vert\right) \\
  & \qquad \geq  \frac{G(t)(\omega)}{t^{3/2}\sqrt{\log \log t}} t^{3/2}\sqrt{\log \log t}-\sup_{h \in [0,1]} \frac{\vert G(t+h)(\omega)-G(t)(\omega)\vert}{[t^{3/2}]\sqrt{\log \log [t]}} [t^{3/2}]\sqrt{\log \log [t]} \\
 & \qquad \geq  \frac{G(t)(\omega)}{t^{3/2}\sqrt{\log \log t}} t^{3/2}\sqrt{\log \log t}-\frac14 [t^{3/2}]\sqrt{\log \log [t]} ,
  \end{align*} 
  where $[t]$ is the integer part of $t$. Thus, using (\ref{li}) the proof is completed.
\end{proof}

\subsection{Estimates for Case 1}
Set
\begin{equation}\label{M}
M(t):=\kappa \int_0^t\int_0^1\int_0^1 G_1(t-s, x,y)\varphi_1(x)W(\d s\,\d y)\,\d x,
\end{equation}
where  $\kappa^{-1}:=\int_0^1 \varphi_1(x)\,\d x$. 

We will need the following support theorem taken from \cite{PTA}.

\begin{theorem}\label{support}
If $f:[0,\,1]\rightarrow \R$ is continuous with $f(0)=0$, then for $\epsilon>0$, we have 
\begin{equation*}
\P(\sup_{0\leq t\leq 1}|B_t-f(t)|<\epsilon)>\eta,
\end{equation*}
where $\eta$ depends only on $\epsilon$ and on the modulus of continuity of $f$.
\end{theorem}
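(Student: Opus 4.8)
Theorem~\ref{support} is the classical lower bound in the support theorem for Brownian motion, and since we only need a positive lower bound with the stated dependence, I would prove it by approximating $f$ by a polygonal path and then peeling off subintervals with the Markov property. First set $\delta:=\epsilon/2$ and let $\omega_f(\cdot)$ be the modulus of continuity of $f$, which is finite since $f$ is uniformly continuous on $[0,1]$. Choose an integer $n=n(\epsilon,\omega_f)$ with $\omega_f(1/n)<\epsilon/8$, put $t_k:=k/n$, and let $\tilde f$ be the piecewise linear interpolant of $f$ through the nodes $(t_k,f(t_k))$. A one-line estimate gives $\sup_{0\le t\le1}|f(t)-\tilde f(t)|\le 2\,\omega_f(1/n)<\epsilon/2$, so that $\{\sup_t|B_t-\tilde f(t)|<\epsilon/2\}\subseteq\{\sup_t|B_t-f(t)|<\epsilon\}$ and it suffices to bound the probability of the polygonal event from below.

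For the polygonal event I would condition successively on $\mathcal{F}_{t_0},\dots,\mathcal{F}_{t_{n-1}}$. Introduce, for each $k$, the event $\hat E_k$ that $B$ stays within $\epsilon/2$ of the segment of $\tilde f$ on $[t_k,t_{k+1}]$ \emph{and} that $|B_{t_{k+1}}-f(t_{k+1})|\le\epsilon/4$; note $B_0=0=f(0)$, so the chain starts inside the node ball of radius $\epsilon/4$. Then $\bigcap_{k}\hat E_k$ is contained in the polygonal event, and the Markov property yields $\P\big(\bigcap_k\hat E_k\big)\ge\prod_{k=0}^{n-1}p_k$, where $p_k:=\inf_{|y-f(t_k)|\le\epsilon/4}\P_y\big(\sup_{[0,1/n]}|B-L_k|<\epsilon/2,\ |B_{1/n}-f(t_{k+1})|\le\epsilon/4\big)$ and $L_k$ is the line segment from $f(t_k)$ to $f(t_{k+1})$. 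The role of the strictly smaller node ball ($\epsilon/4$ versus the tube radius $\epsilon/2$) is to guarantee that the admissible starting points $y$ lie in the open tube, leaving room to maneuver; this is exactly what keeps $p_k$ away from $0$. Taking $\eta:=\prod_k p_k>0$ then proves the theorem, provided each $p_k$ is positive and bounded below by a quantity depending only on $\epsilon$ and $\omega_f$.

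It remains to bound $p_k$ from below, which I would do by Brownian scaling and a Brownian bridge estimate. After writing $B_s=y+W_s$ with $W$ a standard motion from the origin and rescaling time by $W_s=\tfrac1{\sqrt n}\,\tilde W_{ns}$, the inner probability becomes a fixed functional of a standard Brownian motion $\tilde W$ on $[0,1]$: stay in a tube of radius $\tfrac12\sqrt n\,\epsilon$ around a segment whose sup-norm is at most $\sqrt n(\epsilon/4+\omega_f(1/n))$ and land in a terminal ball. Conditioning on the endpoint turns this into a tube probability for a Brownian bridge with endpoints ranging over a compact set (controlled by $\epsilon/4$ and the slope bound $\omega_f(1/n)$); such a probability is strictly positive and continuous in the endpoints, hence attains a positive minimum $p>0$. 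Since $n$, the radii, and the slope bound all depend only on $\epsilon$ and $\omega_f$, so does $p$, and $\eta\ge p^{\,n}$ has the required form.

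The step I expect to be delicate is precisely this uniform lower bound on $p_k$: one must check that the single-interval probability does not degenerate along the partition and depends on $f$ only through the increment bound $\omega_f(1/n)$, not through the absolute node values $f(t_k)$. This is secured by the fact that, after translation, only increments enter, together with the compactness of the admissible endpoint configurations. An alternative would be to treat the polygonal $\tilde f$ in one stroke via the Cameron--Martin theorem, since $\tilde f$ is absolutely continuous with $\tilde f'\in L^2[0,1]$; the drawback is that the associated exponential density must be controlled on $\{\sup_t|B_t|<\epsilon/4\}$, where both the stochastic-integral and energy terms grow with $n$, so the tube-and-Markov argument is cleaner and makes the dependence on $\epsilon$ and $\omega_f$ transparent.
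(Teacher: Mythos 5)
First, a point of comparison that matters here: the paper does not prove Theorem \ref{support} at all — it is imported verbatim from \cite{PTA} — so there is no in-paper argument to measure yours against; your proposal has to stand on its own, and for the most part it does. The overall structure is a standard and correct route to this lower bound: the reduction to the polygonal interpolant is fine (in fact you can do better than $2\omega_f(1/n)$, since $\tilde f(t)$ is a convex combination of $f(t_k)$ and $f(t_{k+1})$, both within $\omega_f(1/n)$ of $f(t)$, so $\sup_t|f(t)-\tilde f(t)|\le\omega_f(1/n)$); the chaining $\P\big(\bigcap_k\hat E_k\big)\ge\prod_k p_k$ via the Markov property at the nodes is valid; and the choice of node balls of radius $\epsilon/4$ strictly inside the tube of radius $\epsilon/2$ is exactly what makes the induction close, with the start $B_0=0=f(0)$ trivially inside the first ball.

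The one soft spot is the step you yourself flag: the uniform positivity of $p_k$. As written it rests on the assertion that the Brownian-bridge tube probability is \emph{continuous} in the endpoints; for probabilities of open path events one gets lower semicontinuity without further work, but genuine continuity requires showing the boundary of the tube event is null, which you do not do. You can bypass this entirely with an explicit one-interval estimate. In the translated picture set $a:=f(t_k)-y$ and $b:=f(t_{k+1})-y$, so $|a|\le\epsilon/4$ and $|b|\le\epsilon/4+\omega_f(1/n)\le 3\epsilon/8$, and let $L(s)=a(1-sn)+snb$ be the segment on $[0,1/n]$. On the single event $E:=\{\sup_{s\le 1/n}|W_s-snb|<\epsilon/8\}$ (follow the chord from $0$ to $b$) both requirements of $\hat E_k$ hold, since $|W_s-L(s)|\le|W_s-snb|+(1-sn)|a|<\epsilon/8+\epsilon/4<\epsilon/2$ and $|W_{1/n}-b|<\epsilon/8<\epsilon/4$. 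The Cameron--Martin/Girsanov shift over this one subinterval then gives
\begin{equation*}
\P(E)\;\ge\;\exp\!\left(-\tfrac{3n\epsilon^2}{64}-\tfrac{9n\epsilon^2}{128}\right)\,
\P\Big(\sup_{s\le 1/n}|W_s|<\tfrac{\epsilon}{8}\Big)\;>\;0,
\end{equation*}
a bound depending only on $n$ and $\epsilon$, hence only on $\epsilon$ and $\omega_f$; so $p_k\ge p(\epsilon,\omega_f)>0$ uniformly in $k$ and in the admissible $y$, and $\eta\ge p^n$ is fully quantitative. Note this uses Girsanov only locally, on one subinterval of length $1/n$, where the exponential cost is harmless — which is precisely why the objection you raise against the global Cameron--Martin approach (the $n$-dependent blow-up of the density on the whole interval) does not apply here. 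With this substitution your proof is complete and correct.
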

We then have the following which is a consequence of the above theorem.
\begin{proposition}\label{deviation}
Fix $L>0$,  then with a positive probability, the following holds 
\begin{equation*}
M(t)\geq L\quad \text{for all}\quad t\in[\frac{1}{16},\,\frac{3}{16}].
\end{equation*}

\end{proposition}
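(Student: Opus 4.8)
The plan is to collapse the triple integral defining $M(t)$ onto a single spatial eigenmode, rewrite the resulting one-dimensional stochastic integral as a pathwise-continuous functional of a Brownian motion, and then invoke Theorem \ref{support} to force that functional to stay large on $[\frac1{16},\frac3{16}]$. First I would carry out the inner spatial integral using the orthonormality of $\{\varphi_n\}$ in $L^2([0,1])$: since $\int_0^1 G_1(t-s,x,y)\varphi_1(x)\,\d x=\sum_{n\ge1}\frac{\sin(n\pi(t-s))}{n\pi}\varphi_n(y)\int_0^1\varphi_n(x)\varphi_1(x)\,\d x=\frac{\sin(\pi(t-s))}{\pi}\varphi_1(y)$, only the first mode survives and
$$M(t)=\frac{\kappa}{\pi}\int_0^t\sin(\pi(t-s))\,\d W_1(s),\qquad W_1(t):=\int_0^t\int_0^1\varphi_1(y)\,W(\d s\,\d y).$$
Because $\|\varphi_1\|_{L^2([0,1])}=1$, the Wiener integral $W_1$ is a standard one-dimensional Brownian motion, so the setting of Theorem \ref{support} applies with $B=W_1$.

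Next I would eliminate the stochastic integral by a deterministic integration by parts. As $s\mapsto\frac{\sin(\pi(t-s))}{\pi}$ is $C^1$ and vanishes at $s=t$, and $W_1(0)=0$, this yields the pathwise representation
$$M(t)=\kappa\int_0^t\cos(\pi(t-s))\,W_1(s)\,\d s,$$
which is a genuinely continuous function of the path $\{W_1(s):s\in[0,1]\}$. The crucial Lipschitz bound is then immediate: for any continuous $f$ and any $t\in[0,1]$, writing $\Phi_f(t):=\int_0^t\cos(\pi(t-s))f(s)\,\d s$,
$$\bigl|M(t)-\kappa\Phi_f(t)\bigr|\le\kappa\int_0^t|W_1(s)-f(s)|\,\d s\le\kappa\sup_{0\le s\le1}|W_1(s)-f(s)|.$$

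Then I would construct the deterministic target $f$. For $t\in[\frac1{16},\frac3{16}]$ and $s\in[0,t]$ one has $\pi(t-s)\in[0,\frac{3\pi}{16}]\subset[0,\frac\pi2)$, hence $\cos(\pi(t-s))\ge\cos(\frac{3\pi}{16})>0$. Choosing $f$ continuous with $f(0)=0$, $f\ge0$, and $f\equiv C$ on $[\frac1{32},1]$, I obtain $\Phi_f(t)\ge C\cos(\frac{3\pi}{16})\,(t-\tfrac1{32})\ge\frac{C}{32}\cos(\frac{3\pi}{16})$ for all $t\in[\frac1{16},\frac3{16}]$, which exceeds $(L+\kappa)/\kappa$ once $C$ is taken large. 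Fixing $\epsilon=1$ and this $f$, Theorem \ref{support} gives $\P(\sup_{0\le t\le1}|W_1(t)-f(t)|<1)>\eta>0$, and on that event the Lipschitz bound yields $M(t)\ge\kappa\Phi_f(t)-\kappa\ge L$ for all $t\in[\frac1{16},\frac3{16}]$, which is the claim.

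The main obstacle is the opening reduction: justifying rigorously the interchange that collapses the triple integral onto the first eigenmode together with the stochastic integration by parts, so that $M$ is exhibited as a pathwise-continuous functional of the single Brownian motion $W_1$. Once this representation is in hand, the choice of $f$ and the appeal to the support theorem are entirely routine.
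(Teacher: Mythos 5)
Your proposal is correct and follows essentially the same route as the paper: both collapse $M(t)$ onto the first eigenmode to get $M(t)=\kappa\int_0^t\cos(\pi(t-s))B_s\,\d s$ via orthonormality and integration by parts, and both then invoke Theorem \ref{support} together with the positivity of $\cos(\pi(t-s))$ for $t\in[\frac{1}{16},\frac{3}{16}]$, $s\in[0,t]$. The only (tactical) difference is in the final estimate: the paper steers $B$ near the linear ramp $32\tilde{L}t$ and splits the integral into three pieces to control the region where $B$ may be negative, whereas you steer $B$ near a ramp-then-plateau target and use the cleaner sup-norm Lipschitz bound $|M(t)-\kappa\Phi_f(t)|\le\kappa\sup_{s\le 1}|B_s-f(s)|$, which yields the same conclusion with less bookkeeping.
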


\begin{proof}
We start by rewriting $M(t)$ as follows, 
\begin{align*}
M(t)&=\frac{\kappa}{\pi} \int_0^t\int_0^1 \sin(\pi (t-s))\varphi_1(y) \,W(\d y \,\d s)=\frac{1}{\pi}\int_0^t \sin(\pi (t-s))\,\d B_s,
\end{align*}
where  $(B_t)_{t \geq 0}$  is a  Brownian motion.
Integrating by  parts, we can further write
\begin{equation*}
M(t)=\int_0^t\cos(\pi (t-s))B_s\,\d s.
\end{equation*}
If we now choose $f(t)=32\tilde{L}t$, Theorem \ref{support} tells us that the event
\begin{equation*}
A:=\{\sup_{0\leq t\leq 1}|B_t-32\tilde{L}t|\leq \epsilon\}
\end{equation*}
occurs with a positive probability. Therefore, for any $\omega\in A$ and $t\in [\frac{1}{16}, \frac{3}{16}]$, we have 
\begin{align*}
M(t)&= \int_0^{t_0} \cos(\pi (t-s))B_s\,\d s+ \int_{t_0}^{1/32}\cos(\pi (t-s))B_s\,\d s+\int_{1/32}^t\cos(\pi (t-s))B_s\,\d s.
\end{align*}
We now choose $t_0$ so that for $t\geq  t_0$, we have $B_t\geq 0$ for $\omega\in A$. From Theorem \ref{support}, we see that $t_0=\frac{\epsilon}{32\tilde{L}}.$ For $0\leq t \leq t_0$, we have $B_t\geq -\epsilon$. Since the $\cos\pi(t-s)$ is bounded below by a strictly positive number for $t\in [\frac{1}{16}, \frac{3}{16}]$ and $s\in [0,\,t]$, we can ignore the second term of the above display and write
\begin{align*}
M(t)&\geq -c_1\frac{\epsilon^2}{32 \tilde{L}}+c_2(t-\frac{1}{32})(32\tilde{L}t-\epsilon)\geq -c_1\frac{\epsilon^2}{32 \tilde{L}}+c_2\frac{1}{32}(2\tilde{L}-\epsilon).
\end{align*}
The result is proved once we choose $\tilde{L}$ appropriately. 
\end{proof}
\subsection{Estimates for Case 3}
For Case 3, which is the real line case, we consider
\begin{equation} \label{gtx}
g(t,\,x):=\int_0^{t}\int_{\R} G_3(t-s,\,x-y)W(\d y\,\d s).
\end{equation}
For fixed $x \in \R$, $\{g(t,x)\}_{t \geq 0}$  is a centered Gaussian process with covariance given by
$$
\E (g(t,x) g(s,x))=\frac{(s \wedge t)^2}{4}, \qquad s,t \geq 0.
$$
This follows from the fact that for $s \leq t$, we have 
\begin{equation*} \begin{split}
 \E (g(t,x) g(s,x))&=\int_0^s  \int_{\R}  G_3(t-r,\,x-y) G_3(s-r,\,x-y)  \d y \, \d r\\
&=\frac14 \int_0^s \int_{\R}  {\bf 1}_{\vert  x-y \vert < s-r} \d y \, \d r =\frac{s^2}{4}.
\end{split}
\end{equation*}
In particular, this Gaussian process satisfies the  following consequence of Theorem \ref{tw}.
\begin{proposition} \label{prop4}
For fixed $x \in \R$, almost surely,
$$
\limsup_{t \rightarrow \infty} \frac{\sqrt{2}g(t,x)}{t\sqrt{2\log \log t}}>1.
$$
\end{proposition}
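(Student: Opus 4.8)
The plan is to recognise $t\mapsto g(t,x)$ (for fixed $x$) as a centered Gaussian process to which Theorem \ref{tw} applies, and to verify its hypotheses directly from the covariance already recorded in the excerpt. First I would read off the two quantities that Theorem \ref{tw} requires. From $r(s,t)=\E(g(t,x)g(s,x))=(s\wedge t)^2/4$ the standard deviation is $v(t)=\sqrt{r(t,t)}=t/2$, and for $s\le t$ the correlation function is $\rho(s,t)=r(s,t)/(v(s)v(t))=(s^2/4)/(st/4)=s/t$, that is, $\rho(s,t)=(s\wedge t)/(s\vee t)$. Since $v(t)\sqrt{2\log\log t}=(t/2)\sqrt{2\log\log t}$, once Assumption \ref{A1} is checked, applying Theorem \ref{tw} and substituting this $v(t)$ will directly produce a lower bound of the type appearing in the statement.

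The heart of the argument is verifying Assumption \ref{A1} for this $\rho$, and I would work with the exponent $\alpha=1$. For part (i) I use the closed form $\rho(t,t+h)=t/(t+h)=1-(h/t)/(1+h/t)$. Fixing any $\delta>0$ and setting $c=1/(1+\delta)$: when $0<h/t<\delta$ one has $(h/t)/(1+h/t)\ge (h/t)/(1+\delta)$, which gives $\rho(t,t+h)\le 1-c(h/t)$; when $h/t>\delta$ one has $\rho(t,t+h)=1/(1+h/t)<1/(1+\delta)=1-c\delta$, which is exactly the required bound $1-c\delta^{\alpha}$ in that regime. Both hold for all $t$ beyond any fixed threshold $T$. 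For part (ii) the scaling is transparent: for $s>1$, $\rho(t,ts)=t/(ts)=1/s$, so $\rho(t,ts)\log s=(\log s)/s\to 0$ as $s\to\infty$, and the convergence is uniform in $t$ because the expression does not depend on $t$ at all.

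With Assumption \ref{A1} in force, Theorem \ref{tw} yields, almost surely,
\begin{equation*}
\limsup_{t\to\infty}\frac{g(t,x)}{v(t)\sqrt{2\log\log t}}>1,
\end{equation*}
and substituting $v(t)=t/2$ gives the claimed lower bound on the $\limsup$. (As a sanity check, one could instead note that $g(\cdot,x)$ has the same law as the Gaussian martingale $\int_0^\cdot\sqrt{s/2}\,\d B_s$ with $\langle\, \cdot\,\rangle_t=t^2/4$ and invoke the martingale law of the iterated logarithm, but Theorem \ref{tw} is the route matching the rest of the paper.)

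The only delicate point is the two-regime verification of Assumption \ref{A1}(i): one must exhibit a single pair $(c,\alpha)$ valid both for small and for large $h/t$, and here the correct exponent is $\alpha=1$ rather than a H\"older-type exponent in $(0,2)$ that a generic local-increment analysis might suggest. Because $\rho$ has the clean form $(s\wedge t)/(s\vee t)$, this reduces to elementary monotonicity estimates, and the uniformity demanded in part (ii) is automatic since $\rho(t,ts)$ is independent of $t$; thus no genuine obstruction remains beyond this bookkeeping.
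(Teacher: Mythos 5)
Your proof is correct and follows essentially the same route as the paper's own: read off $v(t)=t/2$ and $\rho(s,t)=(s\wedge t)/(s\vee t)$, verify Assumption \ref{A1} with $\alpha=1$ and $c=1/(1+\delta)$, check that $\rho(t,ts)\log s=(\log s)/s\to 0$ uniformly, and apply Theorem \ref{tw}. One caveat, which your write-up shares with the paper's proof: substituting $v(t)=t/2$ into Theorem \ref{tw} literally yields $\limsup_{t\to\infty} 2g(t,x)/(t\sqrt{2\log\log t})>1$, so the quantity in the statement is only shown to exceed $1/\sqrt{2}$ rather than $1$; this factor-$\sqrt{2}$ discrepancy is inherited from the statement itself and is immaterial for the downstream use in Proposition \ref{p2}, where any positive lower bound on the $\limsup$ suffices.
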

\begin{proof}
We will apply Theorem \ref{tw}. As $v(t)=t/2$, it suffices to check Assumption \ref{A1}. We start with (i). If $0<h/t<\delta$ then
$
\rho(t, t+h)=1-\frac{\frac{h}{t}}{1+\frac{h}{t}}<1-\frac{\frac{h}{t}}{1+\delta},
$
thus the  first part of (i) holds with $c=\frac{1}{1+\delta}$ and  $\alpha=1$.
Moreover, if $h/t>\delta$, we have
$
\frac{1}{1+\frac{h}{t}}<\frac{1}{1+\delta}=1-\frac{\delta}{1+\delta},
$
thus the second part of (i) follows as well.  Finally, if $s>1$ we have $ \rho(t, ts)=\frac{1}{s},$
so (ii) is  also satisfied. The proof of the theorem is now complete. 
\end{proof}

We will need the following moment estimates.
\begin{proposition} \label{holder2bb}
For all $t,h>0$ and $z \in \R$,
 \begin{equation*}  \begin{split}
\sup_{x \in \R}\E\left[\vert g(t,x+z)-g(t,x) \vert^2\right] &\leq  
|z| t,\\
\sup_{x \in \R} \E\left[\vert g(t+h,x)-g(t,x) \vert^2\right] &\leq  
h(t+h).
\end{split}
 \end{equation*} 
 \end{proposition}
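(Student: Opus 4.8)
The plan is to exploit the fact that both increments are Wiener integrals against the white noise $W$, so that their second moments equal the squared $L^2([0,\infty)\times\R)$-norms of the associated deterministic kernels (the Walsh isometry). Since $G_3(r,x-y)=\tfrac12\mathbf{1}_{\{|x-y|<r\}}$ depends on $x$ and $y$ only through the difference $x-y$, both kernels, and hence both expectations, are independent of $x$. This disposes of the supremum over $x$ at once, and it remains only to bound a single deterministic integral in each case.

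For the spatial increment, after the change of variables $r=t-s$ I would write
\begin{equation*}
\E\left[\vert g(t,x+z)-g(t,x) \vert^2\right]=\int_0^t\int_{\R}\left[G_3(r,x+z-y)-G_3(r,x-y)\right]^2\,\d y\,\d r .
\end{equation*}
For fixed $r$ the inner integral is $\tfrac14$ times the Lebesgue measure of the symmetric difference of the two intervals $(x-r,x+r)$ and $(x+z-r,x+z+r)$, each of length $2r$ and offset by $z$. That measure equals $\min(2|z|,4r)$ — the intervals overlap when $|z|<2r$ and are disjoint otherwise — so the inner integral is $\min(|z|/2,r)$. Integrating this over $r\in[0,t]$ and using the pointwise bound $\min(|z|/2,r)\le |z|/2$ yields $\E[\,\cdot\,]\le |z|t/2\le |z|t$, as claimed.

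For the temporal increment the support structure of the integrand changes at $s=t$, so I would split the isometry integral into the region $s\in[0,t]$, where the kernel is the difference $G_3(t+h-s,\cdot)-G_3(t-s,\cdot)$, and the region $s\in[t,t+h]$, where only $G_3(t+h-s,\cdot)$ survives. On the first region, the substitution $r=t-s$ together with the nesting $\{|x-y|<r\}\subset\{|x-y|<r+h\}$ reduces the inner integral to $\tfrac14$ times the measure of the annular set $\{r\le |x-y|<r+h\}$, namely $h/2$; integrating over $r\in[0,t]$ gives $ht/2$. On the second region, the substitution $u=t+h-s$ turns the contribution into $\int_0^h (u/2)\,\d u=h^2/4$. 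Summing gives $\E[\,\cdot\,]=ht/2+h^2/4\le h(t+h)$.

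There is no serious obstacle here; the only real care lies in the bookkeeping for the temporal estimate, where one must cleanly separate the region in which both Green kernels contribute from the region in which only the later-time kernel is present, and carry out the two changes of variables consistently. The spatial estimate requires only the elementary observation that the symmetric difference of two equal-length intervals is governed by the $\min$ of twice their offset and their total length.
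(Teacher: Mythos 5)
Your proof is correct and follows essentially the same route as the paper: the Walsh isometry reduces both estimates to Lebesgue measures of symmetric differences of intervals (which, by translation invariance of the kernel, are independent of $x$), and these are computed elementarily. The only cosmetic difference is in the temporal bound, where the paper adds and subtracts $\int_0^t\int_{\R} G_3(t+h-s,x-y)\,W(\d y\,\d s)$ and uses $(a+b)^2\le 2(a^2+b^2)$ to arrive at $h^2+th$, whereas your exact splitting of the isometry integral over $[0,t]$ and $[t,t+h]$ gives the sharper value $ht/2+h^2/4$; both are $\le h(t+h)$.
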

 
 \begin{proof}
 Using expression (\ref{gtx}) and changing variables, we get that
 \begin{equation*}
 \begin{split}
 \E\left[\vert g(t,x+z)-g(t,x) \vert^2\right]&=\frac{1}{4} \int_0^t \int_{\R} \vert {\bf 1}_{\{\vert z+y\vert <s\}}-
 {\bf 1}_{\{\vert y\vert <s\}}\vert^2 \, \d y\,  \d s \\
 &=\frac{1}{4} \int_0^t \int_{\R} {\bf 1}_{\{s-\vert  z\vert  \leq \vert y\vert <s\}} \, \d y \, \d s\leq  \vert z \vert t.
 \end{split}
 \end{equation*}
 Similarly, adding and subtracting the term $\int_0^t G_3(t+h-s,x-y)  W(dy,ds)$, we  get
 \begin{equation*}
 \begin{split}
 &\E\left[\vert g(t+h,x)-g(t,x) \vert^2\right]\\
 &\qquad \leq \frac{1}{2} 
 \int_0^{h} \int_{\R} {\bf 1}_{\{\vert y\vert <s\}} \, \d y \, \d s+\frac 12
 \int_0^t \int_{\R} \vert {\bf 1}_{\{\vert y\vert <s+h\}}-
 {\bf 1}_{\{\vert y\vert <s\}}\vert^2 \, \d y\,  \d s\\
 &\qquad = h^2+\frac 12
 \int_0^t \int_{\R}  {\bf 1}_{\{s \leq \vert y\vert <s+h\}} \, \d y\,  \d s=h^2+th.
 \end{split}
 \end{equation*}
 \end{proof}
 
 As a consequence of Proposition \ref{holder2bb}, we obtain the following estimate. The proof uses an isotropic Kolmogorov continuity theorem obtained in \cite{DKN}, similarly as in the proof of Lemma 4.5 in \cite{DKN}.
 \begin{proposition} \label{supn}
 For all $p \geq 2$, there exists a constant  $A_p>0$ such that 
 for any integer $n \geq 1$,
 $$
  \E\left[\sup_{s,t \in [n,n+2], x,y \in [0,1]}\vert g(t,x)-g(s,y) \vert^p \right] \leq A_pn^{p/2} .
  $$ 
 \end{proposition}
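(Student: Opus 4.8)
The plan is to promote the $L^2$ increment bounds of Proposition \ref{holder2bb} to $L^p$ bounds by Gaussianity, repackage them as a single isotropic H\"older-in-$L^p$ estimate on the planar rectangle $I_n:=[n,n+2]\times[0,1]\subset\R^2$ with a prefactor of order $n^{1/2}$, and then invoke the isotropic Kolmogorov continuity theorem of \cite{DKN}. The essential point to get right is that the variance of the increments grows only linearly in the base time $t\approx n$, while the rectangle $I_n$ stays of bounded diameter, so that all the $n$-dependence is carried by a clean $n^{1/2}$ factor.

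First I would record a combined second-moment bound. For $(t,x),(s,y)\in I_n$ with $s\le t$, Minkowski's inequality and Proposition \ref{holder2bb} give
\[
\|g(t,x)-g(s,y)\|_2\le \|g(t,x)-g(s,x)\|_2+\|g(s,x)-g(s,y)\|_2\le \big((t-s)t\big)^{1/2}+\big(|x-y|\,s\big)^{1/2}.
\]
Since $s\le t\le n+2\le 3n$ for $n\ge 1$, squaring yields $\E[|g(t,x)-g(s,y)|^2]\le 6n\big(|t-s|+|x-y|\big)\le 6\sqrt2\,n\,\|(t,x)-(s,y)\|$, where $\|\cdot\|$ is the Euclidean norm on $\R^2$. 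Because $g$ is a centered Gaussian field, the increment $g(t,x)-g(s,y)$ is itself centered Gaussian, so the equivalence of Gaussian moments ($\E|Z|^p=c_p(\E Z^2)^{p/2}$ for centered Gaussian $Z$) applies directly to the whole increment, giving, for a constant $\tilde c_p$,
\[
\|g(t,x)-g(s,y)\|_p\le \tilde c_p\, n^{1/2}\,\|(t,x)-(s,y)\|^{1/2},\qquad (t,x),(s,y)\in I_n.
\]

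Next I would feed this into the isotropic Kolmogorov continuity theorem of \cite{DKN} on the two-dimensional index set $I_n$. The required exponent condition is $\gamma p>\dim I_n$, i.e. $\tfrac{p}{2}>2$, which holds for $p>4$; the theorem then produces a continuous modification together with a bound on the H\"older seminorm of the form $\big\|\sup_{\mathbf z\neq\mathbf z'\in I_n}\|\mathbf z-\mathbf z'\|^{-\theta}|g(\mathbf z)-g(\mathbf z')|\big\|_p\le C_p\,n^{1/2}$ for $\theta\in(0,\tfrac12-\tfrac2p)$, with $C_p$ independent of $n$ precisely because $\mathrm{diam}\,I_n\le\sqrt5$ uniformly in $n$. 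Bounding $\|\mathbf z-\mathbf z'\|^{\theta}\le 5^{\theta/2}$ and taking $p$-th powers gives, for every $p>4$,
\[
\E\Big[\sup_{s,t\in[n,n+2],\,x,y\in[0,1]}|g(t,x)-g(s,y)|^p\Big]\le A_p\,n^{p/2}.
\]

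Finally, I would extend this to the full range $2\le p\le 4$ by Jensen's inequality: fixing any $p_0>4$, the supremum raised to the power $p\le p_0$ has expectation at most the $(p/p_0)$-th power of its $p_0$-analogue, namely $A_{p_0}^{p/p_0} n^{p/2}$, which gives the claim for all $p\ge 2$. The main obstacle is not any single estimate but the bookkeeping of the $n$-uniformity: one must verify that the H\"older constant from \cite{DKN} is genuinely independent of $n$ (it is, since only the variance prefactor $\approx n$ and the fixed diameter of $I_n$ enter), and then dispose of the small-$p$ regime $2\le p\le 4$, where the isotropic theorem does not directly apply, via the Jensen step.
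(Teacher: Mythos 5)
Your proof is correct, and it reaches the bound through a different instantiation of the same underlying machinery. Both arguments start from the $L^2$ increment estimates of Proposition \ref{holder2bb} and both appeal to the Garsia--Rodemich--Rumsey-type continuity result in the appendix of \cite{DKN}; the difference lies in the choice of Young function. The paper takes $\Psi(x)=e^{|x|}-1$ and $p(x)=\sqrt{n}\,x$, so Gaussianity enters through exponential integrability of the normalized increments, and the resulting chaining integral controls the supremum for every $p\ge 2$ in a single pass, with Jensen's inequality used only to pull the expectation inside $\ln^p$. You instead upgrade the $L^2$ bounds to $L^p$ bounds via Gaussian moment equivalence and invoke the polynomial (classical Kolmogorov) form of the theorem, which on a two-dimensional parameter set with increment exponent $p/2$ forces the restriction $p>4$; you then recover the range $2\le p\le 4$ by the Jensen step $\E[S^p]\le(\E[S^{p_0}])^{p/p_0}$, which indeed yields exactly $A_{p_0}^{p/p_0}n^{p/2}$. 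Your bookkeeping is sound: the combined estimate $\E[|g(t,x)-g(s,y)|^2]\le 6n(|t-s|+|x-y|)$ follows from Proposition \ref{holder2bb} together with $t\le n+2\le 3n$, and the $n$-uniformity of the Kolmogorov constant is justified exactly as you say, since $[n,n+2]\times[0,1]$ is a translate of the fixed rectangle $[0,2]\times[0,1]$, so only the variance prefactor $n$ and the fixed diameter enter. What the paper's exponential route buys is the absence of the $p>4$ threshold and of the two-step patch; what your route buys is that it requires only the standard power-moment Kolmogorov continuity theorem rather than the Orlicz-norm version, making the argument somewhat more elementary.
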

 
 \begin{proof}
 Let  $n \geq 1$ be a fixed integer.
We plan to apply Proposition A.1 and Remark A.2(a) in \cite{DKN} with 
 $S=\{ [n,n+2]\times [0,1]\}$, $\rho((t,x),(s,y))=(\vert t-s \vert+\vert x-y \vert)^{1/2}$,
 $\mu(\d t\d x)=\d t\d x$, $\Psi(x)=e^{\vert x\vert}-1$, and  $p(x)=\sqrt{n}x$. 
We set
$$
\mathcal{C}:=\int_{S} \int_{S} \exp \left( \frac{\vert g(t,x)-g(s,y) \vert}{\sqrt{n}\rho((t,x),(s,y)) }\right)\d t \, \d x \, \d s\, \d y .
$$
By Proposition \ref{holder2bb}, for any $(t,x)$ and  $(s,y)$ in $S$, we have that
$$
(\E\left[\vert g(t,x)-g(s,y) \vert^2\right])^{1/2} \leq 3 \sqrt{n}\rho((t,x),(s,y)).
$$
Then, it easily follows that
 $$
 \E[\mathcal{C}]=\int_{S} \int_{S} \E \left[\exp \left( \frac{\vert g(t,x)-g(s,y) \vert}{\sqrt{n}\rho((t,x),(s,y)) }\right)\right]\d t \, \d x \, \d y \,\d s\leq e^3 \int_{S} \int_{S} \d t \, \d x \, \d y \,\d s = 4e^3.
 $$
  Observe that $$\{s,t \in [n,n+2], x,y \in [0,1]\}\subset \{s,t \in [n,n+2], x,y \in [0,1]: \rho((t,x),(s,y))\leq \sqrt{3}\}$$
Therefore, by Proposition A.1 and Remark A.2(a) with $\epsilon=\sqrt{3}$ and Jensen's inequality, we get that for all $p \geq 2$,
 \begin{equation*}
 \begin{split}
  &\E\left[\sup_{s,t \in [n,n+2], x,y \in [0,1]}\vert g(t,x)-g(s,y) \vert^p \right] \\
  &\qquad \leq 10^p \E \left[ \sup_{t \in [n,n+2], x \in [0,1]} \left(\int_0^{2\sqrt{3}} 
  \sqrt{n} \, \d u \ln \left( 1+\frac{\mathcal{C}}{[\mu(B_{\rho}((t,x),u/4))]^2}\right)\right)^p \right] \\
 &\qquad = 10^p n^{p/2}\E \left[  \left(\int_0^{2\sqrt{3}} 
   \d u \ln \left( 1+\frac{\mathcal{C}}{c u^8}\right)\right)^p \right] \\
  &\qquad \leq A'_p n^{p/2}\int_0^{2\sqrt{3}} 
  \d u \ln^p \left( 1+\frac{\mathcal{\E[ C]}}{c u^8}\right) \\
  &\qquad \leq A'_p n^{p/2}\int_0^{2\sqrt{3}} 
  \d u \ln^p \left( 1+\frac{4 e^3}{c u^8}\right)=A_p n^{p/2},
 \end{split}
 \end{equation*}
  where $B_{\rho}((t,x),r)$ denotes open ball of radius $r>0$ and  center $(t,x) \in S$ with distance given by $\rho$ and $A_p$ and $A'_p$ are positive constants independent on $n$.
 This implies the desired result.
 \end{proof}
 
We can now use Proposition \ref{supn} to get the following almost sure result. 
\begin{proposition} \label{supnas}
Almost surely
 $$
   \sup_{s,t \in [n,n+2], x,y \in [0,1]}
  \frac{\vert g(t,x)-g(s,y) \vert}{n\sqrt{\log \log n}} \longrightarrow 0, \quad \text{ as } n \rightarrow \infty.
  $$ 
 \end{proposition}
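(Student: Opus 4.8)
The plan is to run the same Borel--Cantelli style argument that produced \eqref{li1} in the proof of Proposition \ref{assumpC}, now driven by the moment bound of Proposition \ref{supn}. Write
\[
M_n := \sup_{s,t \in [n,n+2],\, x,y \in [0,1]} \vert g(t,x)-g(s,y)\vert,
\]
so that Proposition \ref{supn} gives $\E[M_n^p] \leq A_p\, n^{p/2}$ for every $p \geq 2$, with $A_p$ a constant independent of $n$. The goal is to show $M_n/(n\sqrt{\log\log n}) \to 0$ almost surely.

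First I would fix any exponent $p>2$ and normalize. Since $(n\sqrt{\log\log n})^p = n^p (\log\log n)^{p/2}$, Tonelli's theorem together with the moment bound yields
\[
\E\left[\sum_{n=3}^{\infty} \left(\frac{M_n}{n\sqrt{\log\log n}}\right)^p\right]
= \sum_{n=3}^{\infty} \frac{\E[M_n^p]}{n^p (\log\log n)^{p/2}}
\leq \sum_{n=3}^{\infty} \frac{A_p}{n^{p/2}(\log\log n)^{p/2}}.
\]
The sum is taken from $n=3$ onward so that $\log\log n>0$ and the summand is well defined. Because $p>2$, the exponent $p/2$ on $n$ exceeds $1$, and the slowly varying factor $(\log\log n)^{p/2}$ only helps convergence, so the right-hand side is finite.

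Consequently the nonnegative series $\sum_{n} (M_n/(n\sqrt{\log\log n}))^p$ has finite expectation, hence converges almost surely. A convergent series of nonnegative terms has general term tending to zero, and since $p>0$ this forces $M_n/(n\sqrt{\log\log n}) \to 0$ almost surely, which is precisely the assertion.

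I do not anticipate any genuine obstacle here: the content is entirely in Proposition \ref{supn}, and the present statement is just the a.s.\ upgrade of that $L^p$ bound. The only points to keep track of are (i) choosing $p$ strictly larger than $2$, which is exactly what makes the normalized moments summable, and (ii) ensuring the summand is meaningful for large $n$, which is why the series is indexed from $n=3$; neither affects the $n\to\infty$ conclusion.
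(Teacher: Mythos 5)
Your proof is correct and follows exactly the paper's own argument: apply Proposition \ref{supn} with $p>2$, sum the normalized $p$-th moments (finite since $p/2>1$), and conclude via Tonelli and almost sure convergence of the series that the general term vanishes. The only difference is cosmetic—you index from $n=3$ to keep $\log\log n$ positive, a detail the paper glosses over by summing from $n=1$.
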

 \begin{proof}
Applying Proposition \ref{supn} with $p>2$, we obtain
 \begin{equation*} \begin{split}
 \E \left[\sum_{n=1}^{\infty} \sup_{s,t \in [n,n+2], x,y \in [0,1]}
  \frac{\vert g(t,x)-g(s,y) \vert^p}{(n\sqrt{\log \log n})^p}\right] 
  &\leq \sum_{n=1}^{\infty} \frac{A_p n^{p/2}}{(n\sqrt{\log \log n})^p} <\infty.
 \end{split}
 \end{equation*}
 The proof is completed.
 \end{proof}
We are now ready to show a key result behind the  proof of Theorem \ref{line}.
\begin{proposition} \label{p2}
Almost surely, there exists a sequence $t_n \rightarrow \infty$ such that 
 $$
 \inf_{h \in [0,1], x \in [0,1]} g(t_n+h,x) \rightarrow \infty.
 $$
 \end{proposition}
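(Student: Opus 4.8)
The plan is to mirror the decomposition used in the proof of Proposition \ref{assumpC}, replacing the one-parameter oscillation estimate there by the two-parameter estimate of Proposition \ref{supnas}, and the law of the iterated logarithm for $G$ by the lower bound of Proposition \ref{prop4}. Since both of the latter hold almost surely, I would work on the intersection of the two corresponding full-probability events. First I would fix the spatial point $x=0$ and apply Proposition \ref{prop4} at $x=0$: because the $\limsup$ there simplifies to $\limsup_{t\to\infty} g(t,0)/(t\sqrt{\log\log t}) > 1$, I can extract a (random) sequence $t_n \to \infty$ along which the ratio eventually exceeds $1$, so that
$$
g(t_n, 0) \geq t_n \sqrt{\log\log t_n} \quad \text{for all } n \text{ large.}
$$

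The key inequality is the elementary pointwise lower bound
$$
\inf_{h\in[0,1],\, x\in[0,1]} g(t_n+h, x) \geq g(t_n, 0) - \sup_{h\in[0,1],\, x\in[0,1]} \vert g(t_n+h, x) - g(t_n, 0)\vert.
$$
To bound the oscillation term by Proposition \ref{supnas}, I would set $m_n := \lfloor t_n \rfloor$. Then $t_n$ and every $t_n + h$ with $h\in[0,1]$ lie in the window $[m_n, m_n+2]$, while $0$ and every $x\in[0,1]$ lie in $[0,1]$, so
$$
\sup_{h\in[0,1],\, x\in[0,1]} \vert g(t_n+h, x) - g(t_n, 0)\vert \leq \sup_{s,t\in[m_n, m_n+2],\, x,y\in[0,1]} \vert g(t,x) - g(s,y)\vert .
$$
Proposition \ref{supnas} shows the right-hand side is $o(m_n \sqrt{\log\log m_n})$, and since $m_n \sim t_n$ this is $o(t_n \sqrt{\log\log t_n})$. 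Combining with the lower bound on $g(t_n,0)$ gives
$$
\inf_{h\in[0,1],\, x\in[0,1]} g(t_n+h, x) \geq t_n\sqrt{\log\log t_n} - o\bigl(t_n\sqrt{\log\log t_n}\bigr) \longrightarrow \infty,
$$
which is exactly the claim.

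The routine parts are the triangle-inequality bound and the comparison $m_n \sim t_n$. The only point requiring care is aligning the continuous parameter $t_n$ with the integer index $n$ appearing in Proposition \ref{supnas}; this is precisely why I take $m_n = \lfloor t_n \rfloor$ and use the length-$2$ window $[m_n, m_n+2]$, which accommodates the full range $[t_n, t_n+1]$ since $h$ varies only over $[0,1]$. I would also remark that although the sequence $t_n$ is $\omega$-dependent, this causes no difficulty: the estimate of Proposition \ref{supnas} is uniform over the entire window $[m_n, m_n+2]\times[0,1]$ and holds simultaneously for all large $n$ on a single full-probability event, so the bound applies to whatever sequence $t_n$ is produced by Proposition \ref{prop4}.
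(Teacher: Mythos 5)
Your proposal is correct and follows essentially the same argument as the paper: fix a spatial point, apply the triangle-inequality decomposition $\inf_{h,x} g(t+h,x) \geq g(t,x_0) - \sup_{h,x}\vert g(t+h,x)-g(t,x_0)\vert$, and combine Proposition \ref{prop4} (for the lower bound along a sequence) with Proposition \ref{supnas} (for the oscillation term, aligned via the integer part of $t$). Your write-up merely makes explicit the details the paper compresses into its final sentence, including the harmless $\omega$-dependence of the sequence $t_n$.
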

 
 \begin{proof}
 Fix $x_0 \in [0,1]$ and write
 \begin{align*}
&\inf_{h \in [0,1], x \in [0,1]} g(t+h,x)\geq g(t,x_0)+\inf_{h \in [0,1], x \in [0,1]} \left(-\vert g(t+h,x)-g(t,x_0)\vert\right) \\
  &\qquad \geq  \frac{g(t,x_0)}{t\sqrt{\log \log t}} t\sqrt{\log \log t}-\sup_{h \in [0,1],x \in  [0,1]} \frac{\vert g(t+h,x)-g(t,x_0)\vert}{[t]\sqrt{\log \log [t]}} [t]\sqrt{\log \log [t]}.
  \end{align*}
  We apply Propositions \ref{prop4} and \ref{supnas} to conclude.
 \end{proof}

\section{Proofs of Theorems \ref{Bonder-Groissman}, \ref{periodic}, and \ref{line}}
\begin{proof}[Proof of Theorem \ref{Bonder-Groissman}]
We start proving the first implication. Since the solution blows up in finite time with positive probability, we can find a set $\Omega$  satisfying $\P(\Omega)>0$ such that for any $\omega\in \Omega$, we have $\tau(\omega)<\infty$, where $\tau$ is defined in (\ref{tau}).  We fix such an $\omega$ but we won't indicate the dependence on $\omega$ in what follows to simplify the notation.
Consider the mild formulation which is given by 
\begin{equation}\label{mild11} \begin{split}
&u(t,x)=\int_0^1 G_1(t,x,y) v_0(y) \, \d y+\frac{\partial}{\partial t} \left(\int_0^1 G_1(t,x,y) u_0(y) \, \d y \right) \\
&\quad +\int_0^t\int_0^1 G_1(t-s, x,y) \sigma(u(s,y))\,W(\d s\,\d y) +\int_0^t\int_0^1 G_1(t-s, x,y)b(u(s,y))\, \d s\, \d y \\
&=:I_1(t,x)+I_2(t,x)+I_3(t,x)+I_4(t,x).
\end{split}
\end{equation}
We will bound each term in (\ref{mild11}) separately. 
First, since $v_0$ is bounded in $[0,1]$, using (\ref{op}), we have that
$$
\vert I_1(t,x) \vert \leq c t,
 $$
 for some constant $c>0$. 
Similarly, 
\begin{align*}
\vert I_2(t,x) \vert\leq C,
\end{align*}
for some constant $C>0$.
Set
\begin{equation*}
M_{\tau}:=\sup_{(t,x) \in (0, \tau]\times [0,1]}\left|I_3(t,x)\right|.
\end{equation*}
Then, since $\sigma$ is bounded, using Burholder-David-Gundy inequality and (\ref{g1q}), we obtain  that
\begin{equation*} \begin{split}
\E[M_{\tau}]&\leq \E\big[\sup_{(t,x) \in (0, \tau]\times [0,1]}\left|I_3(t,x)\right|\big] \\
&\leq c  \sup_{(t,x) \in (0, \tau]\times [0,1]} \left(\int_0^{t}\int_0^1 G_1^2(t-s,x,y) \, \d y \, \d s\right)^{1/2}<\infty.
\end{split}
\end{equation*}
Hence, $M_{\tau}<\infty$, a.s.
Finally, set $Y_t:=\sup_{x\in [0,\,1]} \vert u(t,\,x)\vert.$ Then, appealing again to (\ref{op}), we obtain that
\begin{align*}
\vert I_4(t,x)\vert   \leq\int_0^t b(Y_s) (t-s)\,\d s.
\end{align*}

Therefore, we have shown that uniformly for all $t \in (0, \tau]$,
\begin{equation} \label{yt}
Y_t\leq C+ct+ M_{\tau} +\int_0^t (t-s) b(Y_s) \,\d s.
\end{equation}
We can now use Proposition \ref{comp} with $\tilde{y}(t)=Y_t$ and $y(t)$ the solution to
$$
y(t)= C+1+(c+1)t+ M_{\tau} +\int_0^t (t-s) b(y(s)) \,\d s,
$$
to obtain that $y(t) \geq \tilde{y}(t)$. Hence, since $\tilde{y}(t)$ blows up  at time $\tau$, $y(t)$ should blow up by time $\tau$.
Then, by Lemma \ref{lem} we conclude that 
Condition \ref{B} holds. This shows the first part  of the theorem.  

We now prove the  second part. Set $g(t):=\kappa\int_0^1 u(t,x)\varphi_1(x)\,\d x$, where $\kappa$ is defined in (\ref{M}).
We restrict $t$ to the interval  $[\frac{1}{16},\,\frac{3}{16}]$ as in Proposition \ref{deviation}. We will use the mild formulation (\ref{mild11}) with $\sigma$ being a positive  constant. We have,
\begin{align*}
\int_0^1I_1(t,x)\varphi_1(x) \, \d x&=\int_0^1 \frac{\sin \pi t}{\pi}\varphi_1(y)v_0(y)\,\d y\geq \frac{2t}{\pi} \int_0^1\varphi_1(y)v_0(y)\,\d y
\end{align*}
and
\begin{align*}
\int_0^1I_2(t,x) \varphi_1(x)\,\d x&=\int_0^1 \cos \pi t \varphi_1(y) u_0(y)\,\d y\geq \cos\left(\frac{3\pi}{16}\right)\int_0^1 \varphi_1(y) u_0(y)\,\d y.
\end{align*}
As $b$ is convex, using Jensen's inequality, we get that
\begin{align*}
\int_0^1 I_4(t,x) \kappa \varphi_1(x)\,\d x&=\int_0^t\int_0^1 \frac{\sin\pi (t-s)}{\pi} \kappa \varphi_1(y) b(u(s,y))\,\d y\,\d s\\
&\geq \frac{2}{\pi} \int_0^t (t-s) b(g(s))\,\d s.
\end{align*}

We combine the above to obtain 
\begin{align*}
g(t)\geq A+ Bt+ \frac{2}{\pi} \int_0^t (t-s) b(g(s))\,\d s+\sigma M(t)\quad \text{for}\quad t\in [\frac{1}{16},\,\frac{3}{16}],
\end{align*}
where  $A,B$ are nonnegative constants and $M(t)$ is given by \eqref{M}.  According to Proposition \ref{deviation}, for any $L>0$, with a positive probability,
on $t\in [\frac{1}{16},\,\frac{3}{16}]$,
\begin{align*}
g(t)\geq L+ (B+1)(t-\frac{1}{16})+ \frac{2}{\pi} \int_{\frac{1}{16}}^t (t-s) b(g(s))\,\d s,
\end{align*}
where  we have used  the  fact that  $\frac{3}{16}-\frac{1}{16}\geq t-\frac{1}{16}$, $b$ is nonnegative, and $L$ can be taken as large as needed.

By the comparison principle explained in Remark \ref{remc}, we conclude that, with positive probability, on $t\in [\frac{1}{16},\,\frac{3}{16}]$, $y(t)\leq g(t)$, where 
$y(t)$ is the solution to 
$$
y(t)= \frac{L}{2}+\frac{B+1}{2}(t-\frac{1}{16})+ \frac{2}{\pi} \int_{\frac{1}{16}}^t (t-s) b(y(s))\,\d s.
$$
We now assume Condition \ref{B}.
Then, we need to make sure that under the assumption that 
\begin{equation*}
\int_{L/2}^\infty\frac{1}{[((B+1)/2)^2+\frac{4}{\pi}\int_{L/2}^s b(r)\,\d r]^{1/2}}\,\d s<\infty,
\end{equation*}
$y(t)$ blows up in a small time. But this follows from the fact that we can always take $L$ to be large enough so that the above integral is small enough so that $y$  has a blow up time in $[\frac{1}{16},\,\frac{3}{16}]$. Then, $g(t)$ will also blow up with positive probability on $[\frac{1}{16},\,\frac{3}{16}]$.
The proof of the theorem is now completed.\end{proof}

\begin{proof}[{Proof of Theorem \ref{periodic}}]
The mild formulation  writes as
\begin{equation*} \begin{split}
&u(t,x)=\frac12 \int_{x-t}^{x+t} v_0(y) \, \d y+\frac12 (u_0(x-t)+u_0(x+t)) \\
&\quad +\int_0^t\int_I G_2(t-s, x-y) \sigma(u(s,y))\,W(\d s\,\d y)+\int_0^t\int_I G_2(t-s, x,y)b(u(s,y))\,\d s\,\d y.
\end{split}
\end{equation*}
The proof of the first part  follows as in Case  1 above. Indeed, since $u_0$, $v_0$, and $\sigma$ are bounded and using  (\ref{g2}), we obtain  that (\ref{yt}) also holds true with $Y_t=\sup_{x\in I} \vert u(t,\,x)\vert$. Thus, using Proposition \ref{comp} and Lemma \ref{lem} we conclude. 

 We next prove the  second part. Set $X_t:=\frac{1}{2\pi} \int_I u(t,x) \, \d x$. We have
$$
\frac{1}{2}\int_I\left(\int_{x-t}^{x+t} v_0(y)\, \d y +u_0(x-t)+u_0(x+t)\right)\, \d x=t \int_I v_0(y)\, \d  y+\int_I u_0(y) \, \d y.
$$
By the stochastic Fubini theorem,
$$
\frac{1}{2\pi}\int_I\int_0^t\int_I G_2(t-s, x-y) \,W(\d s\,\d y)\, \d x=\frac{1}{2\pi}\int_0^t\int_I (t-s) \,W(\d s\,\d y)=:G(t).
$$
Finally, by Jensen's inequality, using the fact that $b$ is convex,
$$
\frac{1}{2\pi}\int_I\int_0^t\int_I G_2(t-s, x,y)b(u(s,y))\,\d s\,\d y\, \d x \geq 
\int_0^t (t-s) b(X_s) ds.
$$
Therefore, we have  proved that for all $t \geq 0$
$$
X_t \geq  A+Bt+\sigma G(t)+\int_0^t (t-s) b(X_s) \, \d s,
$$
for some nonnegative  constants  $A,B$. 
We now note that since $G(t)$ is given by \eqref{G}, it therefore satisfies Proposition \ref{assumpC}.
Thus, we can proceed as in the proof of Proposition \ref{p1}. Choose $\omega \in \Omega$ satisfying Proposition \ref{assumpC}
and let $t_n \rightarrow \infty$ such that $\inf_{0\leq h \leq 1} G(h+t_n)$ goes to infinity. Using the nonnegativity of $b$, we can write
 \begin{align*}
 &X_{t+t_n}\geq A +B t+\int_{t_n}^{t+t_n} (t-s) \; b(X_s)\,\d s+\sigma G(t+t_n)\\
 &\quad \geq A+\frac{1}{2}\inf_{0\leq h\leq 1} \sigma G(h+t_n)+(B +\frac{1}{2}\inf_{0\leq h\leq 1} \sigma G(h+t_n))t+\int_{0}^{t}(t-s) \; b(X_{s+t_n})\,\d s,
\end{align*}
for $t \in [0,1]$. 
Set $\alpha_n:=A+\frac{1}{4}\inf_{0\leq h\leq 1}\sigma G(h+t_n)$ and $\beta_n:=B +\frac{1}{4}\inf_{0\leq h\leq 1}\sigma G(h+t_n)$, where $n$ is taken large enough so that $\inf_{0\leq h\leq 1}G(h+t_n)>0$. Remark \ref{remc} implies that $X_{t+t_n}\geq Z_t$, where
\begin{equation*}
Z_t=\alpha_n+\beta_n t+\int_{0}^{t}(t-s) b(Z_s)\,\d s.
\end{equation*}
We next assume that Condition \ref{B} holds and we take $n$ large enough so that $T(\alpha_n,\beta_n)<1$, that is, the blow-up time of $Z_t$ is strictly less than 1. Since $X_{t+t_n}\geq Z_t$ this implies that $X_t$ blows up in finite time,
which concludes the second part of the theorem. 
\end{proof}

\begin{proof}[Proof of Theorem \ref{line}]
In this case, the mild formulation  writes as
\begin{equation}\label{mbis}\begin{split}
u(t,x)=I(t,x)+\sigma g(t,x)+\frac12 \int_0^t\int_{\vert x-y \vert<t-s} b(u(s,y))\,\d s\,\d y,
\end{split}
\end{equation}
where
$$
I(t,x):=\frac12 \int_{x-t}^{x+t} v_0(y) \, \d y+\frac12 (u_0(x-t)+u_0(x+t))
$$
and $g(t,x)$ is defined in (\ref{gtx}).
Let $\{t_n\}$ be a sequence of positive numbers which we are going to choose later. From (\ref{mbis}) and the nonnegativity of the function $b$, we obtain 
\begin{equation*}\begin{split}
u(t+t_n,\,x)
\geq I(t+t_n, \, x) +\sigma g(t+t_n,x)+\frac12\int_0^{t}\int_{\vert x-y \vert<t-s} b(u(s+t_n,\,y))\,\d y\,\d s.
\end{split}
\end{equation*}
Hence by Proposition \ref{p2}, we can find a sequence $t_n \rightarrow \infty$ so that $g(t+t_n,\,x)$ (and thus $u(t+t_n,\,x)$) are  positive for all $0\leq t\leq 1$ and $x \in (0,1)$. On the  other hand, since $b$ is nondecreasing, for fixed $x \in (0,1)$, we get that
\begin{align*}
\int_0^{t}\int_{\vert x-y \vert<t-s} b(u(s+t_n,\,y))\,\d y\,\d s &\geq \int_0^t b\left(Y_s\right)
\int_{\{\vert x-y \vert<t-s \}\cap \{y \in (0,1)\}}\,\d y\,\d s\\
& \geq \int_0^t (t-s)\, b\left(Y_s\right)\,\d s,
\end{align*}
where $Y_t:=\inf_{y \in (0,1)} u(t+t_n,\,y)$. Combining the above estimates we obtain 
$$
Y_t\geq \inf_{0\leq h\leq 1, x \in (0,1)}\{I(h+t_n,x)+ \sigma g(h+t_n,\,x)\}+\int_0^t (t-s) \, b(Y_s)\,\d s.
$$
We now choose $\omega$ as in Proposition \ref{p2}, and we can therefore find a sequence $t_n\rightarrow \infty $ such that $\inf_{0\leq h\leq 1, x \in (0,1)}g(h+t_n,\,x)$ goes to infinity. Using a similar argument as in  the Proposition \ref{p1} as we did for  Case 2 above, we conclude.
\end{proof}

\section{Appendix}

In this section we provide some properties of the three Green kernels $G_1$, $G_2$, and $G_3$ defined in the Introduction. First observe that the kernels $G_2$ and $G_3$ are non-negative functions, while $G_1$ is a real-valued function on $\R_+ \times [0,1] \times [0,1]$. In fact, $G_1$ has the alternative expression
\begin{equation} \label{ee1}
G_1(t,x,y)=\sum_{n \in \mathbb{Z}} \left({\bf 1}_{\{\vert y-x-2n\vert \leq t \}}-{\bf 1}_{\{\vert y+x-2n\vert \leq t  \}} \right),
\end{equation}
see \cite{Caba}.
This makes the study of the wave equation in $[0,1]$ more complicated. Observe that for all $y \in \R$, we have $$\int_{\R} G_3(t, x-y)\, \d x=t.$$
It is easy to check that $G_2$ satisfies the same property. Indeed, for all $y \in I$,
\begin{equation}\label{g2} \begin{split}
\int_I G_2(t, x-y) \, \d x&= \frac{1}{2} \sum_{n \in \mathbb{Z}}
\int_I {\bf 1}_{\{ \vert x-y +2n\pi\vert <t\}} \, \d x\\
& = \frac{1}{2} \sum_{n \in \mathbb{Z}}
\int_{2n\pi}^{2\pi(n+1)} {\bf 1}_{\{ \vert x-y \vert <t\}} \, \d x=\frac{1}{2} 
\int_{\R} {\bf 1}_{\{ \vert x-y \vert<t\}} \, \d x =t.
\end{split}
\end{equation}
On the other hand, using (\ref{ee1}) we get that for all $y \in [0,1]$,
\begin{equation} \label{op}\begin{split}
\int_0^1 G_1(t, x,y) \, \d x&\leq  \sum_{n \in \mathbb{Z}} \int_0^1 {\bf 1}_{\{\vert y-x-2n\vert \leq t \}} \, \d x
= \sum_{n \in \mathbb{Z}} \int_{2n}^{1+2n} {\bf 1}_{\{\vert y-x\vert \leq t \}} \, \d x\\
&=\frac12 \int_{\R} {\bf 1}_{\{ \vert x-y \vert<t\}} \, \d x =t.
\end{split}
\end{equation}
Similarly, we have  that for all $T>0$ and $y \in [0,1]$,
\begin{equation} \label{g1q}
\sup_{(t,y) \in [0,T] \times [0,1]} \int_0^1 G^2_1(t, x,y) \, \d x<\infty.
\end{equation}

\bibliography{Foon-Nual}
\end{document}